\newtheorem{theorem}{Theorem}[section]
\newtheorem{corollary}[theorem]{Corollary}
\newtheorem{lemma}[theorem]{Lemma}
\newtheorem{prop}[theorem]{Proposition}
\theoremstyle{definition}
\newtheorem{definition}[theorem]{Definition}
\newtheorem{remark}[theorem]{Remark}
\newcommand{\C}{\mathbb{C}}
\newcommand{\PP}{\mathbb{P}}
\DeclareMathAlphabet{\pazocal}{OMS}{zplm}{m}{n}
\newcommand{\OO}{{\pazocal O}}
\newcommand{\cE}{{\mathcal{E}}}
\DeclareMathOperator{\im}{Im}
\DeclareMathOperator{\Coker}{Coker}
\DeclareMathOperator{\Ker}{Ker}
\def\dot{\mathchar"013A}
\newcommand{\hdot}{{\raise1pt\hbox to0.35em{\Huge $\dot$}}}
\begin{document}
\date{\today}

\title[Addition-deletion results for plus-one generated curves]%
{Addition-deletion results for plus-one generated curves}
\author{Anca~M\u acinic$^*$ and Piotr Pokora$^{**}$}
\thanks{$^*$ Partially supported by 
 a grant of the Romanian Ministry of Education and Research, CNCS - UEFISCDI, project number \textbf{PN-III-P4-ID-PCE-2020-2798}, within PNCDI III}
\thanks{$^{**}$ Partially supported by  The Excellent Small Working Groups Programme \textbf{DNWZ.711/IDUB/ESWG/2023/01/00002} at the University of the National Education Commission Krakow.}

\maketitle
\thispagestyle{empty}
\begin{abstract}
In the recent paper A. Dimca proves that when one adds to or deletes a line from a free curve the resulting curve is either free or plus-one generated. We prove the converse statements, we give an additional insight into the original deletion result, and derive a characterisation of free curves in terms of behaviour to addition/deletion of lines.
We describe the possible splitting types of the bundle of logarithmic vector fields associated to a plus-one generated curve.
\end{abstract} 

\section{Introduction}

Let $\mathcal{C}$ be a reduced curve in $\mathbb{P}^2 \coloneqq \mathbb{P}^{2}_{\C}$ defined by the equation $f_{\mathcal{C}}=0$, where $f_{\mathcal{C}} \in S \coloneqq \C[x,y,z]$.
The logarithmic derivation module associated to the curve $\mathcal{C}$ is the submodule of ${\rm Der}(S)$ defined by
$$D(\mathcal{C}) \coloneqq \{\theta \in {\rm Der}(S) \; | \; \theta(f_{\mathcal{C}}) \in S f_{\mathcal{C}} \}.$$ The Euler derivation $\theta_E = x \partial x + y \partial y + z \partial z$ is an element of  $D(\mathcal{C})$ for any curve $\mathcal{C}$. Furthermore, the module $D(\mathcal{C})$ admits a decomposition as the direct sum
$$D(\mathcal{C}) = D_0(\mathcal{C}) \oplus S \theta_E,$$
where $$D_0(\mathcal{C}) \coloneqq \{\theta \in {\rm Der}(S) \; | \; \theta(f_{\mathcal{C}})=0 \}.$$

The curve $\mathcal{C}$ is called {\it free} if $D_0(\mathcal{C})$ is free as an S-module. In this case, $D_0(\mathcal{C})$ is a rank $2$ graded module and we call {\it exponents} of $\mathcal{C}$ the pair of degrees of the elements in a basis of homogeneous derivations of $D_0(\mathcal{C})$. In other words, if $\{\theta_2, \theta_3\}$ is a basis of homogeneous derivations of $D_0(\mathcal{C})$, then $\exp(\mathcal{C}) \coloneqq (\deg(\theta_2), \deg(\theta_3))$.

We recall the definition of a recent notion introduced by Abe in \cite{A0} for hyperplane arrangements and extended to curves by Dimca and Sticlaru in \cite{DS0}.

\begin{definition}
\label{def:POG}
A reduced curve $\mathcal{C}$ in $\mathbb{P}^2$ is called {\it plus-one generated} if its associated module of derivations $D(\mathcal{C})$ admits a minimal set of homogeneous generators $\{\theta_E, \theta_2, \theta_3, \psi\}$ that satisfy a unique relation of the form
\begin{equation}
\label{eq:POGrel}
f_1 \theta_E + f_2 \theta_2 + f_3 \theta_3+ \alpha \psi = 0,
\end{equation}
where $f_i \in S$ for $i =  1, 2, 3$ and $\alpha \in S$ with $\deg(\alpha) =1$. The {\it exponents} of $\mathcal{C}$ are, by definition, the pair $(\deg(\theta_2), \deg(\theta_3))$ and $\deg(\psi)$ is called the {\it level} of $\mathcal{C}$.
\end{definition}

In the recent paper \cite{Dimca}, Dimca proves the following addition result for free curves (see Section \ref{sec:prelim} for the additional notions and notations).

\begin{theorem}
\label{thm:add}
Let $\mathcal{C}$ be a free reduced curve in $\mathbb{P}^2$ of exponents $(d_2, d_3)$ and $L \subset \mathbb{P}^2$ be a line that is not an irreducible component of $\mathcal{C}$. Then $\mathcal{C} \cup L$ is either free with the exponents  $(d_2, d_3+1)$  and $d_2+1 = | \mathcal{C} \cap L|+ \epsilon(\mathcal{C}, L) $
or plus-one generated with the exponents $(d_2+1, d_3+1)$ and level $d = | \mathcal{C} \cap L|+ \epsilon(\mathcal{C}, L) -1$.
\end{theorem}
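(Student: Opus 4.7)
The plan is to reduce the analysis of $D_0(\mathcal{C}\cup L)$ to a submodule of $D_0(\mathcal{C})$ cut out by a single condition involving the linear form $\ell$ defining $L$. Let $f$ be the defining equation of $\mathcal{C}$, $n=\deg(\mathcal{C})$, and note that $\gcd(f,\ell)=1$ since $L$ is not a component; a short argument then shows $D(\mathcal{C}\cup L) = D(\mathcal{C})\cap D(L)$. Combining this with the splitting $D(\mathcal{C}) = D_0(\mathcal{C})\oplus S\theta_E$ and the Euler identity, one checks that $\eta = \theta_0 + s\theta_E$ (with $\theta_0\in D_0(\mathcal{C})$) lies in $D_0(\mathcal{C}\cup L)$ if and only if $\theta_0(\ell)=h\ell$ for some $h\in S$ and $s=-h/(n+1)$. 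This gives a degree-preserving isomorphism of graded $S$-modules
\[
D_0(\mathcal{C}\cup L)\;\cong\;M\;:=\;\ker\bigl(\phi\colon D_0(\mathcal{C}) \to S/(\ell)\bigr),\qquad \phi(\theta) := \theta(\ell) \bmod (\ell),
\]
so the whole problem reduces to describing $M$.

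Write $\{\theta_2,\theta_3\}$ for a homogeneous basis of $D_0(\mathcal{C})$ with $d_2\ge d_3$, and set $a_i := \phi(\theta_i) \in R := S/(\ell)$, identifying $R\cong\k[u,v]$. The image of $\phi$ is the graded ideal $(a_2,a_3)\subset R$, and since $R$ is a two-variable polynomial ring (hence a UFD), its syzygy module has a single generator. Precisely, letting $g := \gcd(a_2,a_3)$, writing $a_i = g\,b_i$ with $\gcd(b_2,b_3)=1$, and setting $e := \deg g$, the syzygies of $(a_2,a_3)$ are spanned by $(b_3,-b_2)$ of degree $d_2+d_3-e$. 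Lifting to $S$, the module $M$ is generated by $\ell\theta_2$, $\ell\theta_3$, and $\eta := \tilde b_3\theta_2 - \tilde b_2\theta_3$, in degrees $d_2+1$, $d_3+1$, $d_2+d_3-e$, subject to the single relation
\[
\ell\,\eta \;=\; \tilde b_3(\ell\theta_2) - \tilde b_2(\ell\theta_3).
\]

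The two alternatives of the theorem correspond to whether $a_2$ lies in the ideal $(a_3)\subset R$. If so (the borderline situation), a change of basis $\theta_2\mapsto \theta_2 - \tilde c\,\theta_3$, with $\tilde c\in S$ a lift of the ratio $a_2/a_3 \in R$, puts us in the normal form $a_2=0$; then $\eta$ reduces to a scalar multiple of $\theta_2$, and $M = S\theta_2\oplus S(\ell\theta_3)$ is free of rank two with exponents $(d_2,d_3+1)$, whence $\mathcal{C}\cup L$ is free with these exponents. Otherwise, all three generators of $M$ are needed, the displayed relation is the unique one, and its coefficient $\ell$ on $\eta$ is linear; by Definition~\ref{def:POG}, $\mathcal{C}\cup L$ is plus-one generated with exponents $(d_2+1,d_3+1)$ and level $d := d_2+d_3-e$. (The degenerate case $a_2=a_3=0$ is excluded by a dimension count: it would force $D(\mathcal{C})\subset D(L)$ and thus $L$ to be a component of $\mathcal{C}$.)

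The remaining and most delicate step is to identify $e$ with the geometric invariants $|\mathcal{C}\cap L|$ and $\epsilon(\mathcal{C},L)$. The zero scheme of $g$ on $L$ records the points $p\in L$ at which every logarithmic derivation of $\mathcal{C}$ is already tangent to $L$, and a local analysis at each such $p$ (distinguishing whether $p$ lies on $\mathcal{C}$, and if so accounting for the contribution of the singularity of $\mathcal{C}$ at $p$) should yield the unified identity
\[
d_2+d_3-e+1 \;=\; |\mathcal{C}\cap L| + \epsilon(\mathcal{C},L),
\]
which specialises to $d_2+1 = |\mathcal{C}\cap L| + \epsilon(\mathcal{C},L)$ in the free case ($e=d_3$) and to $d = |\mathcal{C}\cap L| + \epsilon(\mathcal{C},L)-1$ in the plus-one generated case. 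This geometric identification of $e$ is the principal obstacle of the proof, since it requires precise control of the restriction of the sheafified logarithmic module $\widetilde{D_0(\mathcal{C})}$ at every point of $\mathcal{C}\cap L$ and must conform to the definition of $\epsilon(\mathcal{C},L)$ recalled in Section~\ref{sec:prelim}.
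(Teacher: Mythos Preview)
This theorem is not proved in the present paper: it is quoted from Dimca's preprint \cite{Dimca} (see the sentence introducing Theorem~\ref{thm:add}). So there is no in-paper proof to compare your proposal against; what the paper does supply is the exact sequence \eqref{eq:sheaves_ex_seq_general}, which is the sheaf-theoretic backbone of Dimca's argument.

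On the substance of your proposal: the algebraic framework is correct and well chosen. The identification $D_0(\mathcal{C}\cup L)\cong\ker\bigl(D_0(\mathcal{C})\to S/(\ell)\bigr)$ is valid, and the dichotomy free\,/\,plus-one generated follows exactly as you say from whether one of $a_2,a_3$ divides the other in $R$. This is essentially the classical Euler--multiarrangement argument transplanted to curves. Two small omissions: you treat only $a_2\in(a_3)$ and not the symmetric case $a_3\in(a_2)$ (which gives the free outcome with the roles of $d_2,d_3$ swapped), and the ``dimension count'' excluding $a_2=a_3=0$ is asserted but not carried out.

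The genuine gap is the one you flag yourself: the identity $d_2+d_3-e+1=|\mathcal{C}\cap L|+\epsilon(\mathcal{C},L)$ is only conjectured, not proved. This is not a detail---it is the entire geometric content of the theorem. In Dimca's approach the invariant $\epsilon(\mathcal{C},L)$ is \emph{defined} so that \eqref{eq:sheaves_ex_seq_general} holds; the numerical relations then drop out of a Chern-class computation (as in the proofs of Theorems~\ref{thm:add_converse}--\ref{thm:del_inv} here). Your module-theoretic $e$ is the degree of the torsion of $\mathrm{coker}\,\phi$ on $L$, and connecting it to $|\mathcal{C}\cap L|+\epsilon(\mathcal{C},L)$ amounts to reproving the local analysis behind \eqref{eq:sheaves_ex_seq_general}. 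Until that step is done---either by a direct local computation at each point of $\mathcal{C}\cap L$, or by invoking \eqref{eq:sheaves_ex_seq_general} and reading off $c_2$---your argument establishes only the free/POG dichotomy, not the stated formulae for the exponents and level.
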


In Section \ref{sec:prelim} we recall some necessary notions and results from the literature and we prove some new results related to Chern numbers and splitting types for the vector bundle associated to a plus-one generated curve, defined in  \ref{sec:prelim}. In general, evaluating splitting types onto projective lines for vector bundles is a difficult endeavor. 
We describe in Theorem \ref{thm:splitting_type_values} the set of possible values for the splitting types of the vector bundle associated to a plus-one generated curve, generalizing similar results on plus-one generated line arrangements, respectively on nearly free curves, from \cite{AIM}, respectively from \cite{AD}.\\

In Section \ref{sec:main} we present our main results.
We prove in Theorem \ref{thm:add_converse} a converse of the above result. In the same paper Dimca proved a deletion result for free reduced curves, see \cite[Theorem 1.1]{Dimca}. It states that for a free curve $\mathcal{C}$ with a line $L$ being an irreducible component, the curve $\mathcal{C}'$ obtained by removing from $\mathcal{C}$ line $L$ is either free or plus-one generated. The author asks whether some relation between the exponents of $\mathcal{C}$ and the exponents (and level) of $\mathcal{C}'$ can be given. We answer this question in Theorem \ref{thm:exp_computation}.
We prove in Theorem \ref{thm:del_inv} a converse of the deletion result \cite[Theorem 1.1]{Dimca}.

 Theorem \ref{thm_add_del_converse_free} generalizes a similar result on conic-line arrangements of Schenck and Tohaneanu, see \cite[Theorem 2.5]{ST}, describing the situation when the addition or the deletion of a projective line from a free curve results in a free curve.

Finally, summing up Theorems  \ref{thm:add_converse},  \ref{thm:exp_computation}, \ref{thm:del_inv},  \ref{thm_add_del_converse_free} and \cite[Theorems 1.1, 1.2]{Dimca}, we give in Theorems \ref{thm:char_free_addition} and \ref{thm:char_free_deletion} a freeness criterion for curves in terms of behaviour to addition or to deletion of projective lines.

\section*{Acknowledgments}
The first author is grateful to Jean Vall\`es for patiently answering many questions regarding Chern polynomials.

\section{Preliminaries}
\label{sec:prelim}

Unless otherwise stated, $\mathcal{C}$ is a reduced curve in $\mathbb{P}^2$ and $L \subset \mathbb{P}^2$ is a line that is an irreducible component of $\mathcal{C}$. Let $f_{\mathcal{C}} \in S$ be a homogeneous polynomial such that  $f_{\mathcal{C}} = 0$ defines $\mathcal{C}$.
We denote by  $(\mathcal{C}, \mathcal{C}', \mathcal{C}'')$ the triple obtained by the deletion-restriction with respect to the line $L$, i.e., $\mathcal{C}'$ is the curve obtained by removing from $\mathcal{C}$ line $L$, and $\mathcal{C}'' = \mathcal{C}' \cap L$.
Without loss of generality, we may take $\alpha_L = z$ as the defining linear form of the line $L$. Then $f_{\mathcal{C}} = zf_{\mathcal{C}'}$.\\

Denote by $\cE_{\mathcal{C}}$ the locally free sheaf (i.e. vector bundle) obtained by the sheafification of the graded module $D_0(\mathcal{C})$.
 Recall that $\cE_{\mathcal{C}} = \mathcal{T}_{\mathcal{C}}(-1)$, where $\mathcal{T}_{\mathcal{C}}$ is Saito's sheaf of logarithmic vector ﬁelds along $\mathcal{C}$ (see \cite{S}).

\begin{remark}
\label{rem:quasi}
By \cite[Theorem 2.3]{Dimca} (see also \cite[Theorem 1.6, Remark 1.8]{STY}), we have an exact sequence of sheaves (actually vector bundles) for the triple $(\mathcal{C}, \mathcal{C}', \mathcal{C}'')$ defined above, namely
\begin{equation}
\label{eq:sheaves_ex_seq_general}   
0 \rightarrow \mathcal{E}_{\mathcal{C}'}(-1) \overset{\alpha_L}\longrightarrow  \mathcal{E}_{\mathcal{C}} \rightarrow \mathcal{O}_L(1-|\mathcal{C}''| -\epsilon(\mathcal{C}',L)) \rightarrow 0,
\end{equation}
where $\epsilon(\mathcal{C}',L)\in \mathbb{Z}$ is a constant defined by Dimca in \cite[page 2]{Dimca}. This constant measures the defect from the property of being a quasi-homogeneous triple, namely  if all singularities of the triple $(\mathcal{C}, \mathcal{C}', \mathcal{C}'')$ are quasi-homogeneous, 
then the above exact sequence has becomes
\begin{equation}
\label{eq:sheaves_ex_seq}
0 \rightarrow \mathcal{E}_{\mathcal{C}'}(-1) \overset{\alpha_L}\longrightarrow  \mathcal{E}_{\mathcal{C}} \rightarrow \mathcal{O}_L(1-|\mathcal{C}''|) \rightarrow 0.
\end{equation}
\end{remark}

\noindent Note also that by \cite[Remark 2.4]{Dimca} we only need to assume that the singularities of $\mathcal{C}$ on $L$ are quasi-homogeneous in order for the exact sequence \eqref{eq:sheaves_ex_seq} to hold.\\

Denote by $c(\mathcal{E})$ the Chern polynomial of a complex vector bundle $\mathcal{E}$. It is well-known that, for an exact sequence of vector bundles
$$ 
0 \rightarrow \mathcal{E} \rightarrow \mathcal{F}  \rightarrow \mathcal{G}  \rightarrow 0,
$$
we have the relation 
\begin{equation}
\label{eq:chern_poly_product}
c(\mathcal{F}) = c(\mathcal{E}) c(\mathcal{G}).
\end{equation}
In particular, if $c_i(\mathcal{E})$ is the $i$-th Chern number of $\mathcal{E}$, then we have the equality
\begin{equation}
\label{eq:c_2}
c_2(\mathcal{F}) = c_2(\mathcal{E}) + c_1(\mathcal{E})c_1(\mathcal{G}) + c_2(\mathcal{G}).
\end{equation}

\begin{lemma}
\label{lemma: E(-1)} Let $\mathcal{E}$ be a rank $2$ vector bundle and $c(\mathcal{E}) = 1+c_1(\mathcal{E})t+c_2(\mathcal{E})t^2$ its Chern polynomial. Then the Chern polynomial of $\, \mathcal{E}(-1)$ has the following form
$$ c(\mathcal{E}(-1)) = 1+(-2+c_1(\mathcal{E}))t+(c_2(\mathcal{E})- c_1(\mathcal{E}) +1)t^2.$$
\end{lemma}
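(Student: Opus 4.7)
The plan is to apply the splitting principle. First I would formally factor the Chern polynomial as $c(\mathcal{E}) = (1+a_1 t)(1+a_2 t)$, where $a_1, a_2$ are the formal Chern roots of $\mathcal{E}$, so that $c_1(\mathcal{E}) = a_1+a_2$ and $c_2(\mathcal{E}) = a_1 a_2$ by Vi\`ete's formulas.

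Next, since $\mathcal{O}(-1)$ has Chern polynomial $1 - t$ (i.e.\ first Chern class $-1$), and since tensoring a line bundle of Chern root $a$ with $\mathcal{O}(-1)$ shifts its Chern root to $a-1$, the splitting principle tells us that the Chern roots of $\mathcal{E}(-1) = \mathcal{E} \otimes \mathcal{O}(-1)$ are $a_1 - 1$ and $a_2 - 1$. Therefore
$$c(\mathcal{E}(-1)) = \bigl(1+(a_1-1)t\bigr)\bigl(1+(a_2-1)t\bigr).$$
Expanding the right-hand side and collecting coefficients gives
$$c_1(\mathcal{E}(-1)) = (a_1-1)+(a_2-1) = c_1(\mathcal{E}) - 2,$$
$$c_2(\mathcal{E}(-1)) = (a_1-1)(a_2-1) = a_1 a_2 - (a_1+a_2) + 1 = c_2(\mathcal{E}) - c_1(\mathcal{E}) + 1,$$
which is exactly the claim.

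There is essentially no obstacle here; the computation is routine once one invokes the splitting principle. As an alternative (and as a sanity check), one can derive the formula without appealing to the splitting principle by using the general identity $c_k(\mathcal{E} \otimes \mathcal{L}) = \sum_{i=0}^{k} \binom{r-i}{k-i} c_1(\mathcal{L})^{k-i} c_i(\mathcal{E})$ for a rank $r$ bundle twisted by a line bundle $\mathcal{L}$, specialized to $r=2$ and $c_1(\mathcal{L})=-1$, which yields the same two coefficients directly.
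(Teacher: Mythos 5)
Your proof is correct and essentially matches the paper's, which simply cites the standard formula for the Chern classes of $\mathcal{E}\otimes\mathcal{L}$ with $c(\mathcal{O}_{\mathbb{P}^2}(-1))=1-t$; your splitting-principle computation is just the standard derivation of that formula written out, and your stated alternative is literally the paper's one-line argument. No issues.
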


\begin{proof}
 Immediate follows from the equality $\mathcal{E}(-1) = \mathcal{E} \otimes_{\mathcal{O}_{\mathbb{P}^2}} \mathcal{O}_{\mathbb{P}^2}(-1)$ and the formula for the Chern classes of a tensor product of a rank $2$ vector bundle and a line bundle, taking into account that $c(\mathcal{O}_{\mathbb{P}^2}(-1)) = 1-t$.   
\end{proof}

Let us recall a result due to Yoshinaga devoted to the splitting type along a line of rank $2$ vector bundles on $\PP^2$, which we will apply to sheaves of logarithmic vector fields along a curve, see Remark \ref{rem:sections}.

\begin{theorem}[\cite{Y}]
\label{thm:VB_split}
Let $\mathcal{E}$ be a rank $2$ vector bundle on $\PP^2$, $L$ a line in $\PP^2$ and $\mathcal{E}|_{L} =\OO_L(-a) \oplus  \OO_L(-b)$. Then
$$
c_2(\mathcal{E}) - a b = {\rm dim}_{\C}Coker(\Gamma_*(\mathcal{E}) \overset{\pi_L}{\longrightarrow}(\Gamma_*(\mathcal{E}|_{L})),
$$
where $c_2(\mathcal{E})$ is the second Chern number of $\mathcal{E}$ and $\pi_L$ is the morphism of graded modules of sections induced by the restriction to $L$ of the vector bundle $\mathcal{E}$. Moreover, $\mathcal{E}$ is splitting if and only if $\pi_L$ is surjective. 
\end{theorem}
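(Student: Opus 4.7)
The plan is to derive the dimension formula through a cohomological count, and to obtain the splitting criterion as a consequence via Horrocks' theorem.

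First, I would tensor the structure sequence $0 \to \OO_{\PP^2}(-1) \xrightarrow{\cdot\, \ell} \OO_{\PP^2} \to \OO_L \to 0$, where $\ell$ is a linear form cutting out $L$, by $\mathcal{E}(k)$ and pass to the induced long exact sequence of cohomology on $\PP^2$. Assembling all degrees produces
$$0 \to H^0_*(\mathcal{E}(-1)) \xrightarrow{\cdot\, \ell} H^0_*(\mathcal{E}) \xrightarrow{\pi_L} H^0_*(\mathcal{E}|_L) \to H^1_*(\mathcal{E}(-1)) \xrightarrow{\cdot\, \ell} H^1_*(\mathcal{E}) \to \cdots,$$
so that in each degree $k$ we have $\dim_\C \Coker(\pi_L)_k = h^0(\mathcal{E}|_L(k)) - h^0(\mathcal{E}(k)) + h^0(\mathcal{E}(k-1))$. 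Serre vanishing makes $\pi_L$ surjective for $k \gg 0$, while all three terms vanish for $k \ll 0$, so $\Coker(\pi_L)$ is finite dimensional.

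Next I would evaluate $\dim_\C \Coker(\pi_L)$ by summing the above identity over $k$. A telescoping reduces the sum to $\sum_{k \le N} h^0(\mathcal{E}|_L(k)) - h^0(\mathcal{E}(N))$ for $N \gg 0$. Using $\mathcal{E}|_L = \OO_L(-a) \oplus \OO_L(-b)$, the first term is a sum of two triangular numbers in $N-a$ and $N-b$. Hirzebruch--Riemann--Roch applied to the rank two bundle $\mathcal{E}$ expresses $h^0(\mathcal{E}(N))$ as a quadratic polynomial in $N$ involving $c_1(\mathcal{E})$ and $c_2(\mathcal{E})$; noting that restriction of the first Chern class to $L$ forces $c_1(\mathcal{E}) = -(a+b)$, all the $N$-dependence and the terms involving $a+b$ cancel, leaving exactly $c_2(\mathcal{E}) - ab$.

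For the splitting assertion I would invoke Horrocks' criterion: a rank two vector bundle $\mathcal{F}$ on $\PP^2$ splits as a direct sum of line bundles if and only if $H^1_*(\mathcal{F}) = 0$. If $\pi_L$ is surjective, then the connecting homomorphism in the long exact sequence above vanishes in every degree, which means that multiplication by $\ell$ on the graded module $H^1_*(\mathcal{E})$ is injective. But $H^1_*(\mathcal{E})$ is a finite-dimensional $\C$-vector space (by Serre vanishing and duality), so a descending induction on degrees forces it to vanish, and Horrocks then yields the splitting of $\mathcal{E}$; the converse is immediate from the long exact sequence. I expect the main technical obstacle to be the Hirzebruch--Riemann--Roch bookkeeping in the second paragraph; the splitting criterion is then essentially formal.
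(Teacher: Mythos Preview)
The paper does not prove this theorem at all: it is stated with the attribution \texttt{[\textbackslash cite\{Y\}]} and quoted verbatim from Yoshinaga's survey, with no proof given. So there is nothing in the paper to compare your argument against.

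That said, your outline is correct and is essentially the standard argument. The long exact sequence and the telescoping/Riemann--Roch count do yield $c_2(\mathcal{E})-ab$ on the nose (the cancellation you anticipate really happens once one uses $c_1(\mathcal{E})=-(a+b)$), and the Horrocks step is fine: injectivity of multiplication by $\ell$ on the finite-length graded module $H^1_*(\mathcal{E})$ forces it to vanish by descending induction on degree. One small point worth making explicit is why $H^1_*(\mathcal{E})$ has finite length on \emph{both} ends; you mention Serre vanishing and duality, and indeed Serre duality gives $H^1(\mathcal{E}(k))\cong H^1(\mathcal{E}^\vee(-3-k))^\vee$, which vanishes for $k\ll 0$ again by Serre vanishing. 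With that in place the argument is complete.
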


The pair $(a,b)$ is called the {\it splitting type} onto $L$ of the bundle $\mathcal{E}$.

\begin{remark}
\label{rem:sections}
 For an arbitrary reduced curve $\mathcal{C}$ we have an isomorphism $\Gamma_*(\mathcal{E}_{\mathcal{C}}) \cong D_0(\mathcal{C})$, see \cite[Proposition 2.1]{AD} for details. Then $\mathcal{C}$ is free if and only if $\mathcal{E}_{\mathcal{C}}$ is splitting.
\end{remark}

Another important instrument to test the freeness property for hyperplane arrangements is Saito's criterion. A version of this result holds for reduced hypersurfaces in $\mathbb{P}^{n}
_{\mathbb{C}}$, in particular for reduced curves in $\mathbb{P}^{2}$, see \cite{ST}.

\begin{theorem}[Saito's criterion for curves]
\label{thm:saito}
Let $\mathcal{C}$ be a reduced curve in $\mathbb{P}^2$ defined by $f_{\mathcal{C}}=0$ with $f_{\mathcal{C}} \in S$. Given a set of homogeneous derivations $\{\theta_1, \theta_2, \theta_3\} \subset D(\mathcal{C})$, the following are equivalent:
\begin{enumerate}
\item  $\{\theta_1, \theta_2, \theta_3\}$ is a basis in  $D(\mathcal{C})$.
\item The determinant of the matrix $M$ is equal to $f_{\mathcal{C}}$ (modulo a non-zero constant), where $M$ is the $3 \times 3$-matrix with lines $[\theta_i(x) \; \theta_i(y) \; \theta_i(z)], \; i=1, 2, 3$. 
\end{enumerate} 
\end{theorem}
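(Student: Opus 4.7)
The plan is to follow the classical Saito argument, tailored to reduced curves in $\PP^2$; the algebraic inputs used are that $S=\C[x,y,z]$ is a UFD and that $f_{\mathcal{C}}$ is squarefree.

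For the implication (1) $\Rightarrow$ (2), the key geometric observation is that at every smooth point $p$ of the affine cone $V(f_{\mathcal{C}}) \subset \C^3$, each of $\theta_1,\theta_2,\theta_3$ lies in the two-dimensional tangent space $T_p V(f_{\mathcal{C}})$ (because $\theta_i(f_{\mathcal{C}}) \in (f_{\mathcal{C}})$ forces $\theta_i$ to be tangent to $V(f_{\mathcal{C}})$ at smooth points), hence the three vectors must be linearly dependent there and $\det M(p)=0$. Since the smooth locus of $V(f_{\mathcal{C}})$ is Zariski dense and $f_{\mathcal{C}}$ is reduced, we deduce $f_{\mathcal{C}}\mid \det M$ in $S$. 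A homogeneous degree count — for a basis of the free graded module $D(\mathcal{C}) = D_0(\mathcal{C})\oplus S\theta_E$ one has $\deg\theta_1+\deg\theta_2+\deg\theta_3 = \deg f_{\mathcal{C}}$, which matches $\deg\det M$ — then pins down $\det M = c\,f_{\mathcal{C}}$ for some $c\in\C^\times$.

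For the converse (2) $\Rightarrow$ (1), $S$-linear independence of $\{\theta_1,\theta_2,\theta_3\}$ is immediate from $\det M = cf_{\mathcal{C}} \neq 0$. To prove that they generate $D(\mathcal{C})$, take an arbitrary $\theta \in D(\mathcal{C})$ and solve the linear system $(a_1,a_2,a_3)\,M = (\theta(x),\theta(y),\theta(z))$ by Cramer's rule, producing candidate coefficients $a_j = N_j/\det M \in \mathrm{Frac}(S)$, where $N_j$ is the determinant of the matrix obtained from $M$ by replacing its $j$-th row with $(\theta(x),\theta(y),\theta(z))$. The task then reduces to proving the divisibility $f_{\mathcal{C}} \mid N_j$ in $S$, which promotes $a_j$ to an element of $S$ and exhibits $\theta$ as an $S$-linear combination of the $\theta_i$.

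The main obstacle is exactly this last divisibility, and I plan to establish it prime-by-prime. Since $S$ is a normal domain, it is enough to verify $a_j \in S_{(p)}$ at each height-one prime $(p)$, and the only nontrivial case is $p \mid f_{\mathcal{C}}$. For such a prime, squarefreeness makes $f_{\mathcal{C}}/p$ a unit in $S_{(p)}$, so the condition $\theta(f_{\mathcal{C}}) \in (f_{\mathcal{C}})$ localises to $\theta(p) \in (p)$, and the same holds for each $\theta_i$; thus all four derivations are local logarithmic derivations along the smooth hypersurface $V(p)$, whose module is free of rank $3$ over $S_{(p)}$. The hypothesis $\det M = c\,f_{\mathcal{C}}$ combined with $\mathrm{ord}_p(f_{\mathcal{C}})=1$ means that the Saito determinant of $\{\theta_1,\theta_2,\theta_3\}$ is a uniformiser times a unit in $S_{(p)}$, which is precisely the affine Saito criterion for the smooth hypersurface $V(p)$ over the DVR $S_{(p)}$; applying it shows that $\{\theta_1,\theta_2,\theta_3\}$ is a local basis of the local logarithmic module, so $\theta \in \sum_i S_{(p)}\theta_i$, i.e.\ $a_j \in S_{(p)}$. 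Running this over all prime divisors of $f_{\mathcal{C}}$ yields $a_j \in S$ and completes the proof.
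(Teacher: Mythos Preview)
The paper does not supply its own proof of this statement: Theorem~\ref{thm:saito} is quoted as a preliminary fact with a reference to \cite{ST}, so there is no in-paper argument to compare against. Your write-up is a faithful rendition of the classical Saito proof and is essentially correct.

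Two remarks are worth making. First, in the step $(1)\Rightarrow(2)$ you invoke the identity $\deg\theta_1+\deg\theta_2+\deg\theta_3=\deg f_{\mathcal C}$ for a homogeneous basis. Be aware that in many references this identity is \emph{deduced} from Saito's criterion rather than used to prove it, so you should indicate an independent justification (a Hilbert-polynomial comparison using the exact sequence $0\to D(\mathcal C)\to \Der(S)\to (J_{f_{\mathcal C}}+(f_{\mathcal C}))/(f_{\mathcal C})\to 0$ does the job, since the cokernel differs from $S/(f_{\mathcal C})$ only in finite length when $f_{\mathcal C}$ is reduced).

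Second, your localisation argument for $(2)\Rightarrow(1)$ is valid, but it is more elaborate than necessary. The tangency observation you already used for $(1)\Rightarrow(2)$---that any three elements of $D(\mathcal C)$ are linearly dependent along the smooth locus of $V(f_{\mathcal C})$, hence have determinant divisible by the reduced polynomial $f_{\mathcal C}$---applies verbatim to the matrix whose rows are $\theta$ and the $\theta_i$ with $i\neq j$. This gives $f_{\mathcal C}\mid N_j$ directly, so $a_j=N_j/(c\,f_{\mathcal C})\in S$ without ever localising or appealing to an auxiliary ``affine Saito criterion'' over $S_{(p)}$.
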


We will denote by $mdr(\mathcal{C}) = mdr(f_{\mathcal{C}})$ the minimal degree of relations for $f_{\mathcal{C}}$, which is by definition the smallest degree of a non-zero derivation from $D_0(\mathcal{C})$. It is known that for free or for plus-one generated curves $\mathcal{C}$ one has $mdr(\mathcal{C}) = {\rm min} \exp(\mathcal{C}) = {\rm min} \{d_{2},d_{3}\}$.

\begin{prop}
\label{prop:chern_2}
Let $\mathcal{C}$ be a plus-one generated curve with the exponents $(d_2, d_3)$ and level $d$. Then the second Chern number of $\cE_{\mathcal{C}}$ is given by the formula:
\begin{equation}
\label{eq:chern}
c_2(\cE_{\mathcal{C}}) = d_2(d_3-1)+d-d_3+1
\end{equation}
\end{prop}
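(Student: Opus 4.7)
The plan is to derive the second Chern number from a minimal graded free resolution of the module $D_0(\mathcal{C})$ obtained directly from Definition~\ref{def:POG}, then sheafify and apply multiplicativity of Chern polynomials via \eqref{eq:chern_poly_product}.

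First I would observe that, using the splitting $D(\mathcal{C}) = D_0(\mathcal{C}) \oplus S\theta_E$, each of the generators $\theta_2, \theta_3, \psi$ can be replaced by its $D_0(\mathcal{C})$-component without changing its degree or the minimality of the generating set, so there is no loss of generality in assuming $\theta_2, \theta_3, \psi \in D_0(\mathcal{C})$. Since $D_0(\mathcal{C}) \cap S\theta_E = 0$, projecting the unique syzygy \eqref{eq:POGrel} onto the $S\theta_E$-summand forces $f_1 = 0$, and the relation in $D_0(\mathcal{C})$ reduces to
\[
f_2 \theta_2 + f_3 \theta_3 + \alpha\psi = 0, \qquad \deg\alpha = 1.
\]
Since this is the unique syzygy among the three generators of $D_0(\mathcal{C})$, we obtain the minimal graded free resolution
\[
0 \to S(-d-1) \to S(-d_2) \oplus S(-d_3) \oplus S(-d) \to D_0(\mathcal{C}) \to 0.
\]

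Next I would sheafify this resolution. Since $\mathcal{E}_{\mathcal{C}}$ is a rank-$2$ vector bundle and $\mathcal{O}_{\PP^2}(-d-1)$ is a line bundle, the sheafified sequence is an exact sequence of vector bundles
\[
0 \to \mathcal{O}_{\PP^2}(-d-1) \to \mathcal{O}_{\PP^2}(-d_2) \oplus \mathcal{O}_{\PP^2}(-d_3) \oplus \mathcal{O}_{\PP^2}(-d) \to \mathcal{E}_{\mathcal{C}} \to 0.
\]
Applying the multiplicativity relation \eqref{eq:chern_poly_product} yields
\[
c(\mathcal{E}_{\mathcal{C}}) \cdot \bigl(1 - (d+1)t\bigr) \;=\; (1-d_2 t)(1-d_3 t)(1-d t) \pmod{t^3}.
\]
Reading off the coefficient of $t^2$ gives, after a short direct calculation,
\[
c_2(\mathcal{E}_{\mathcal{C}}) = (d+1)^2 - (d+1)(d_2+d_3+d) + d_2 d_3 + d(d_2+d_3) = d_2 d_3 - d_2 - d_3 + d + 1,
\]
which is exactly $d_2(d_3-1) + d - d_3 + 1$, as required.

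There is no serious obstacle here: the main point is the bookkeeping step that extracts the resolution of $D_0(\mathcal{C})$ from Definition~\ref{def:POG}, and in particular the observation that the unique relation of a plus-one generated module descends to a \emph{unique} degree-$(d+1)$ syzygy among the three generators of $D_0(\mathcal{C})$. Once that resolution is in hand, the Chern polynomial is computed by a routine manipulation of polynomials modulo $t^3$.
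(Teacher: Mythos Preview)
Your proof is correct and takes a genuinely different route from the paper's. The paper argues indirectly: it invokes the identity $c_2(\mathcal{E}_{\mathcal{C}}) = d_2^{L_0} d_3^{L_0} + \nu(\mathcal{C})$ from \cite{AD} for a generic line $L_0$, quotes the defect formula $\nu(\mathcal{C}) = d - d_3 + 1$ from \cite{DS0}, and then runs a case analysis (based on \cite[Proposition~3.2]{AD}) to pin down the generic splitting type $(d_2^{L_0}, d_3^{L_0})$ in terms of $(d_2,d_3)$. You instead read off the minimal free resolution of $D_0(\mathcal{C})$ directly from Definition~\ref{def:POG}, sheafify, and compute $c_2$ via \eqref{eq:chern_poly_product}. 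Your argument is more self-contained and avoids both the external black boxes and the case split; the only thing the paper's detour buys is that its case analysis is recycled verbatim to prove Corollary~\ref{cor:generic_spl_type} about the generic splitting type, which your approach does not yield as a byproduct.
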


\begin{proof}
Assume without loss of generality that $d_2 \leq d_3$. Since $d_2+d_3= \deg(\mathcal{C})$, then  $d_2 = mdr(\mathcal{C}) \leq \deg(\mathcal{C})/2$. Let $L_0$ be a generic line in $\mathbb{P}^2$ and denote by $(d_2^{L_0}, d_3^{L_0}), \; d_2^{L_0} \leq d_3^{L_0},$ the splitting type onto $L_0$ of the vector bundle $\cE_{\mathcal{C}}$. Recall that $d_2^{L_0} + d_3^{L_0} =  \deg(\mathcal{C}) -1$. From \cite{AD}, we know that 
\begin{equation}
\label{eq:c_2_int}
c_2(\cE_{\mathcal{C}}) = d_2^{L_0} d_3^{L_0} + \nu(\mathcal{C}),
\end{equation}
where $\nu(\mathcal{C})$ is the defect of the curve $\mathcal{C}$, which is equal to $(d - d_3 +1)$ by \cite[Corollary 3.10]{DS0}. In order to finish our proof, it remains to connect the generic splitting type $(d_2^{L_0}, d_3^{L_0})$  with the exponents of $\mathcal{C}$.
 By \cite[Proposition 3.2 (1)]{AD}, $d_2^{L_0}  \leq mdr(\mathcal{C})$, i.e., $d_2^{L_0} \leq d_2$. There are, according to  \cite[Proposition 3.2 (2)-(4)]{AD}, three cases to consider, namely: \\

Case 1: $d_2^{L_0} < (\deg(\mathcal{C}) -2) / 2$, and then $d_2^{L_0}  = mdr(\mathcal{C})$. \\

Case 2:  $d_2^{L_0} \geq  (\deg(\mathcal{C}) -2) / 2$ and $\deg(\mathcal{C}) = 2m$. Then $d_2^{L_0}=m-1$, which implies that $mdr(\mathcal{C}) \in \{m-1, m\}$.\\

Case 3:  $d_2^{L_0} \geq  (\deg(\mathcal{C}) -2) / 2$ and $\deg(\mathcal{C}) = 2m+1$. Then $d_2^{L_0}=m$, so $m \leq mdr(\mathcal{C}) \leq (2m+1)/2$, i.e., $mdr(\mathcal{C}) = m$.\\

In all the above cases we get a formula for $d_2^{L_0}$ in terms of $mdr(\mathcal{C})$. In Case 1 and Case 3 we get $d_2^{L_0} = mdr(\mathcal{C})$. In Case 2 we get either $d_2^{L_0} = mdr(\mathcal{C}) = m-1$ or $d_2^{L_0} = mdr(\mathcal{C})-1 = m-1$.
Since $mdr(\mathcal{C}) = d_2, \; d_2+d_3= \deg(\mathcal{C})$, and $d_2^{L_0} + d_3^{L_0} = \deg(\mathcal{C}) - 1$, in all the above cases formula \eqref{eq:chern} follows from \eqref{eq:c_2_int}.
\end{proof}

\begin{corollary}
\label{cor:generic_spl_type}
Let $\mathcal{C}$ be a plus-one generated curve with exponents $(d_2, d_3), \; d_2 \leq d_3,$ and level $d$.  Let $(d_2^{L_0}, d_3^{L_0}),  d_2^{L_0} \leq  d_3^{L_0},$ be  the generic splitting type of $\cE_{\mathcal{C}}$. Then  $d^{L_0}_2 = d_2$, except possibly when $\deg(\mathcal{C}) = 2m$, when  $d_2^{L_0} \in  \{d_2-1, d_2\}$.
\end{corollary}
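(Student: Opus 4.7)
The plan is to observe that the statement is essentially a byproduct of the case-by-case analysis already carried out in the proof of Proposition \ref{prop:chern_2}. That proof tracks, for each of the three regimes distilled from \cite[Proposition 3.2]{AD}, the precise relationship between the smaller component $d_2^{L_0}$ of the generic splitting type and $mdr(\mathcal{C})$; so the only remaining work is to extract and repackage that information into the cleaner dichotomy claimed by the corollary.

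First I would recall that $mdr(\mathcal{C}) = d_2$ for a plus-one generated curve with exponents $(d_2, d_3)$ satisfying $d_2 \leq d_3$, a fact already noted just before Proposition \ref{prop:chern_2}. Then I would walk through the three cases appearing in the proof of that proposition. In Case 1, where $d_2^{L_0} < (\deg(\mathcal{C}) - 2)/2$, one has $d_2^{L_0} = mdr(\mathcal{C}) = d_2$ directly. In Case 3, where $\deg(\mathcal{C}) = 2m+1$, the inequalities force $d_2^{L_0} = m = mdr(\mathcal{C}) = d_2$. In Case 2, where $\deg(\mathcal{C}) = 2m$, one has $d_2^{L_0} = m-1$ while $mdr(\mathcal{C}) \in \{m-1, m\}$, so $d_2 \in \{m-1, m\}$ and hence $d_2^{L_0} \in \{d_2-1, d_2\}$.

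Assembling the three cases yields exactly the claim: $d_2^{L_0} = d_2$ always, with the single exception of the even-degree case where $d_2^{L_0} = d_2 - 1$ may also occur. There is no genuine obstacle here, since the identification $mdr(\mathcal{C}) = d_2$ for plus-one generated curves, together with the trichotomy from \cite[Proposition 3.2]{AD}, has already been invoked upstream; the corollary is merely a cleaner packaging of what the proof of Proposition \ref{prop:chern_2} implicitly establishes along the way.
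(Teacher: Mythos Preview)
Your proposal is correct and matches the paper's own proof, which simply reads ``Immediate from the proof of Proposition \ref{prop:chern_2}.'' You have merely unpacked that one-line justification by rehearsing the three cases and the identity $mdr(\mathcal{C}) = d_2$, which is exactly the intended reading.
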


\begin{proof}
Immediate from the proof of Proposition \ref{prop:chern_2}.
\end{proof}

In fact, we are able to describe all possible splitting types of $\cE_{\mathcal{C}}$ onto an arbitrary line $L \subset \mathbb{P}^2$, if $\mathcal{C}$ is a plus-one generated curve.

\begin{theorem}
\label{thm:splitting_type_values}
Let $\mathcal{C}$ be a plus-one generated curve with exponents $(d_2, d_3)$ and level $d$.  Then the splitting type of $\cE_{\mathcal{C}}$ onto an arbitrary line $L \subset \mathbb{P}^2$  is one of the pairs  $\{(d_2, d_3-1), (d_2-1, d_3), (d_2-(d -d_3+1), d)\}$.
\end{theorem}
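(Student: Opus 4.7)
The strategy is to restrict the minimal graded free resolution of $D_0(\mathcal{C})$ to $L$ and then classify the resulting map of line bundles on $L \cong \mathbb{P}^1$ by a normal-form argument.

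After replacing each of $\theta_2, \theta_3, \psi$ by its $D_0(\mathcal{C})$-component in the decomposition $D(\mathcal{C}) = D_0(\mathcal{C}) \oplus S\theta_E$, we may assume $\theta_2, \theta_3, \psi \in D_0(\mathcal{C})$. Then applying \eqref{eq:POGrel} to $f_{\mathcal{C}}$ forces $f_1 = 0$, so the minimal graded free resolution of $D_0(\mathcal{C})$ reads
$$
0 \longrightarrow S(-d-1) \xrightarrow{(f_2, f_3, \alpha)^T} S(-d_2) \oplus S(-d_3) \oplus S(-d) \longrightarrow D_0(\mathcal{C}) \longrightarrow 0.
$$
Sheafifying, and using local freeness of $\mathcal{E}_{\mathcal{C}}$ (which yields $\operatorname{Tor}_1^{\mathcal{O}_{\mathbb{P}^2}}(\mathcal{E}_{\mathcal{C}}, \mathcal{O}_L) = 0$), the restriction to $L$ remains short exact:
$$
0 \to \mathcal{O}_L(-d-1) \xrightarrow{\phi} \mathcal{O}_L(-d_2) \oplus \mathcal{O}_L(-d_3) \oplus \mathcal{O}_L(-d) \to \mathcal{E}_{\mathcal{C}}|_L \to 0,
$$
with $\phi = (\bar f_2, \bar f_3, \bar\alpha)^T$ (bars denoting restriction to $L$).

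If $\bar\alpha = 0$, i.e., $L = V(\alpha)$, the third entry of $\phi$ vanishes, so $\mathcal{E}_{\mathcal{C}}|_L$ splits off an $\mathcal{O}_L(-d)$ summand; the complementary line bundle has first Chern class $-(d_2+d_3-d-1)=-(d_2-(d-d_3+1))$, yielding splitting type $(d_2-(d-d_3+1), d)$.

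If $\bar\alpha \neq 0$, pick coordinates $(s:t)$ on $L$ so that $\bar\alpha = t$. The automorphism group of $\mathcal{O}_L(-d_2) \oplus \mathcal{O}_L(-d_3) \oplus \mathcal{O}_L(-d)$ is lower triangular in the ordering $d_2 \leq d_3 \leq d$, so we may subtract polynomial multiples of $\bar\alpha$ from $\bar f_3$ and of $\bar f_3, \bar\alpha$ from $\bar f_2$. Reducing $\bar f_3$ modulo $t$ gives $c_3 s^{d-d_3+1}$ with $c_3 = \bar f_3(1,0)$; if $c_3 \neq 0$ then, since $d-d_2+1 \geq d-d_3+1$, the ideal $(t, s^{d-d_3+1}) \subset \C[s,t]$ contains every homogeneous polynomial of degree $d-d_2+1$, so $\bar f_2$ reduces to $0$, and a Chern class computation on the remaining fiberwise-injective sub-map $\mathcal{O}_L(-d-1) \to \mathcal{O}_L(-d_3) \oplus \mathcal{O}_L(-d)$ identifies its cokernel as $\mathcal{O}_L(-(d_3-1))$, giving splitting $(d_2, d_3-1)$. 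When $c_3 = 0$ but the residue $c_2 = \bar f_2(1,0)$ (after reducing $\bar f_3$ to $0$) is nonzero, the same computation with $d_2, d_3$ swapped yields $(d_2-1, d_3)$. Finally, the case $c_2 = c_3 = 0$ would reduce $\phi$ to $(0,0,t)^T$, whose cokernel contains a torsion summand $\mathcal{O}_{p_0}$ at $p_0 = V(t)$, contradicting local freeness of $\mathcal{E}_{\mathcal{C}}|_L$. The only delicate point is the degree bookkeeping in the row-operation normalization.
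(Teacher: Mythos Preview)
Your argument is correct and takes a genuinely different route from the paper's proof. The paper argues via the restriction map $\pi_L \colon D_0(\mathcal{C}) \to \Gamma_*(\mathcal{E}_{\mathcal{C}}|_L)$: first it invokes \cite[Corollary~3.4]{AD} (using that the defect is $\nu(\mathcal{C})=d-d_3+1$) to confine the splitting type to the interval $\{(d_2,d_3-1),(d_2-1,d_3),\dots,(d_2-(d-d_3+1),d)\}$, and then, for $d_2^L<d_2-1$, it uses Yoshinaga's finiteness of $\Coker(\pi_L)$ (Theorem~\ref{thm:VB_split}) to force $\pi_L(\psi)$ to land on $\theta_3^L$ with a nonzero scalar coefficient, pinning $d_3^L=d$. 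You instead restrict the length-one minimal free resolution of $D_0(\mathcal{C})$ to $L$ (which stays exact because $\mathcal{E}_{\mathcal{C}}$ is locally free) and put the resulting column $(\bar f_2,\bar f_3,\bar\alpha)^T$ into normal form via automorphisms of $\mathcal{O}_L(-d_2)\oplus\mathcal{O}_L(-d_3)\oplus\mathcal{O}_L(-d)$; the three splitting types then drop out of the cases $\bar\alpha=0$, $c_3\neq 0$, and $c_3=0,\,c_2\neq 0$, while $c_2=c_3=0$ is excluded by local freeness of $\mathcal{E}_{\mathcal{C}}|_L$. Your approach is more self-contained (it bypasses both \cite{AD} and Theorem~\ref{thm:VB_split}) and in fact yields more: it tells you exactly \emph{which} of the three types occurs in terms of the explicit data $\bar\alpha$ and the residues $c_2,c_3$, and in particular identifies the line $L=V(\alpha)$ as the (at most) unique line realizing the extremal splitting $(d_2-(d-d_3+1),d)$ when $d>d_3$. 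The paper's approach, by contrast, ties the result into the cokernel/defect framework and makes the connection with Theorem~\ref{thm:VB_split} transparent. Two small points worth making explicit in your write-up: in the $\bar\alpha=0$ case, the ``complementary line bundle'' is indeed a line bundle only because $\mathcal{E}_{\mathcal{C}}|_L$ is locally free (otherwise $(\bar f_2,\bar f_3)$ could have a common zero); and the lower-triangularity claim for the automorphism group should be read as ``contains these triangular operations'', since equalities among $d_2,d_3,d$ enlarge the group but do not affect the reductions you perform.
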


\begin{proof}
The proof is essentially the same as the one of \cite[Theorem 4.4]{AIM}, which proves a similar result for plus-one generated projective line arrangements. We reproduce the arguments here for completion. 

Assume without loss of generality that $d_2 \leq d_3$.
Denote by $(d^L_2, d^L_3), \; d^L_2 \leq  d^L_3$ the the splitting type of $\cE_{\mathcal{C}}$ onto the line $L $. Then $d^L_2 + d^L_3 = \deg(\mathcal{C}) - 1$, see for instance  \cite[Proposition 3.1]{AD}. 

Since $\nu(\mathcal{C})=d - d_3 +1$ (\cite[Proposition 3.7]{DS0}), by 
 \cite[Corollary 3.4]{AD} we have that 
 
 $$(d^L_2, d^L_3) \in \{(d_2, d_3-1), (d_2-1, d_3), \dots,  d_2-(d -d_3+1, d)\}.$$ 
 
 If $d^L_2 \in \{d_2, d_2-1\}$ we are done, since $d_2 + d_3 = \deg(\mathcal{C})$.
  It remains to consider the case $d^L_2 < d_2-1$. By Remark \ref{rem:sections}, the domain of the map $\pi_L$ from Theorem \ref{thm:VB_split} can be identified to the module of derivations $D_0(\mathcal{C})$. One can choose the minimal system of generators $\{\theta_E, \theta_2, \theta_3, \psi\}$ of $D(\mathcal{C})$ from Definition \ref{def:POG} such that $\{ \theta_2, \theta_3, \psi\}$  is a minimal set of generators for $D_0(\mathcal{C})$, with $\deg(\theta_2) = d_2, \; \deg(\theta_3) = d_3, \; \deg(\psi) = d$. 
  
Let  $\{ \theta_2^L, \theta_3^L \}$ be a homogeneous basis of  the codomain of $\pi_L$,  $\Gamma_*(\mathcal{E}_{\mathcal{C}}|_{L})$, which is a free graded module of rank $2$ over $\overline{S}\coloneqq S / (\alpha_L)$, with  $\deg( \theta_2^L)  = d^L_2$ and $\deg( \theta_3^L)  = d^L_3$.
Since $d^L_2 < d_2-1$,  we get $d^L_3 > d_3$, hence 
 $\pi_L(\theta_2), \pi_L(\theta_3)$ are in the $\overline{S}$, a submodule of $\Gamma_*(\mathcal{E}_{\mathcal{C}}|_{L})$ generated by $\theta_2^L$.  
 
Let $\pi_L(\psi) = f \theta^L_2  + g \theta^L_3$, for some  $f,g \in \overline{S}$. Then necessarily $g \neq 0$, otherwise we would have $\dim_{\C} \Coker (\pi_L) = \infty$, a contradiction with Theorem \ref{thm:VB_split}. Moreover, we must have $\deg(g) = 0$, otherwise we can construct an infinite series of elements from $\Gamma_*(\mathcal{E}_{\mathcal{C}}|_{L})$ which are not in the image of $\pi_L$, for instance elements of type $m \theta^L_2+\theta^L_3$, with $m \in \overline{S}$ arbitrary monomial, again a contradiction to the finite dimensionality over $\C$  of  $\Coker (\pi_L)$, stated in Theorem \ref{thm:VB_split}. 
 
In conclusion, $g \in \C \setminus \{0\}$, i.e.,  $\deg \theta^L_3 = d$.
\end{proof}

\begin{remark}
\label{rem:first_chern_number}
If $\mathcal{C}$ is a reduced projective curve, then the first Chern number of $\cE_{\mathcal{C}}$ is given by the formula $c_1(\cE_{\mathcal{C}}) = 1 -\deg(\mathcal{C})$, see for instance \cite{AD}.
\end{remark}

\begin{prop}
\label{prop:deriv_seq}
Let $\mathcal{C}, \mathcal{C}'$ be reduced curves in $\mathbb{P}^2$ such that  $\mathcal{C} = \mathcal{C}' \cup \{L\}$, where $L \subset \mathbb{P}^2$ is a line not included in $\mathcal{C'}$ having the equation $\alpha_L=0$. Let $\mathcal{C}'' = \mathcal{C}' \cap L$.
Then we have the exact sequence of derivation modules, where $\rho$ is defined by taking modulo $\alpha_L$:
\begin{equation}
\label{eq:ex_seq}
0 \rightarrow D(\mathcal{C}')   \overset{\cdot \alpha_L}\rightarrow D(\mathcal{C})   \overset{\rho}\rightarrow D(\mathcal{C}")
\end{equation}
\end{prop}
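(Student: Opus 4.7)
The plan is to verify the three standard ingredients of a left exact sequence by direct algebraic manipulation: that the two maps are well defined, that multiplication by $\alpha_L$ is injective, and that $\ker(\rho) = \alpha_L \cdot D(\mathcal{C}')$. Everything rests on the factorization $f_{\mathcal{C}} = \alpha_L f_{\mathcal{C}'}$, the fact that $\alpha_L$ is prime in $S$, and the fact that $\alpha_L \nmid f_{\mathcal{C}'}$ since $L$ is not an irreducible component of $\mathcal{C}'$.

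Take $\alpha_L = z$ without loss of generality, and write a generic element $\theta \in D(\mathcal{C})$ as $\theta = a\partial_x + b\partial_y + c\partial_z$. First I would expand
\[\theta(f_{\mathcal{C}}) = c\, f_{\mathcal{C}'} + z\, \theta(f_{\mathcal{C}'}) \in S f_{\mathcal{C}} = S z f_{\mathcal{C}'},\]
and use primality of $z$ together with $z \nmid f_{\mathcal{C}'}$ to conclude $z \mid c$. This forces $\rho(\theta) = \bar a\, \partial_x + \bar b\, \partial_y$ to be a genuine derivation on $\overline{S} = S/(z)$; dividing the previous identity by $z$ and reducing modulo $z$ then shows $\rho(\theta)(f_{\mathcal{C}''}) \in \overline{S} f_{\mathcal{C}''}$, so $\rho$ is well defined. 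Injectivity of $\cdot \alpha_L$ is automatic, since $z$ is a nonzerodivisor on the free $S$-module $\Der(S)$. Well-definedness of $\cdot \alpha_L$ into $D(\mathcal{C})$, together with $\rho \circ (\cdot \alpha_L) = 0$, follows from the direct expansion $(z\theta')(z f_{\mathcal{C}'}) = (c' + zk) z f_{\mathcal{C}'}$ whenever $\theta'(f_{\mathcal{C}'}) = k f_{\mathcal{C}'}$ and $\theta'(z) = c'$.

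The only substantive step is exactness at $D(\mathcal{C})$. For $\theta \in \ker(\rho)$, having $z \mid a, b$ by hypothesis and $z \mid c$ by the first step yields $\theta = z\theta'$ for a unique $\theta' \in \Der(S)$. Substituting into $\theta(f_{\mathcal{C}}) \in S f_{\mathcal{C}}$ and cancelling a factor of $z$ gives $c' f_{\mathcal{C}'} + z\,\theta'(f_{\mathcal{C}'}) = h f_{\mathcal{C}'}$ for some $h \in S$, and one more application of primality of $z$ with $z \nmid f_{\mathcal{C}'}$ forces $z \mid (h - c')$, whence $\theta'(f_{\mathcal{C}'}) \in S f_{\mathcal{C}'}$, i.e., $\theta' \in D(\mathcal{C}')$. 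The main (mild) obstacle is just careful bookkeeping of these divisibility arguments; no deeper tool beyond primality of the linear form $\alpha_L$ and the reducedness hypothesis on $\mathcal{C}$ is needed.
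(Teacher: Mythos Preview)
Your argument is essentially the same as the paper's, and all the exactness steps are correct. There is one point where you are too quick, however. When you say ``dividing the previous identity by $z$ and reducing modulo $z$ then shows $\rho(\theta)(f_{\mathcal{C}''}) \in \overline{S} f_{\mathcal{C}''}$,'' what you have actually shown is that $\overline f_{\mathcal{C}'} \mid \rho(\theta)(\overline f_{\mathcal{C}'})$, where $\overline f_{\mathcal{C}'}=f_{\mathcal{C}'}(x,y,0)$. But $\mathcal{C}''=\mathcal{C}'\cap L$ is by convention the \emph{reduced} point set, so its defining polynomial $f_{\mathcal{C}''}$ is the radical of $\overline f_{\mathcal{C}'}$; since $L$ may meet $\mathcal{C}'$ with multiplicity greater than one, these need not coincide. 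The paper handles this explicitly: writing $\overline f_{\mathcal{C}'}=\gamma_X^{k}h$ with $\gcd(\gamma_X,h)=1$ for each point $X\in\mathcal{C}''$, one applies the Leibniz rule to $\gamma_X^{k}h \mid \rho(\theta)(\gamma_X^{k}h)$ to extract $\gamma_X \mid \rho(\theta)(\gamma_X)$. This is the standard fact that $D(f)=D(f_{\mathrm{red}})$, but it does need to be said; otherwise the well-definedness of $\rho$ into $D(\mathcal{C}'')$ is not established. Apart from this omission your write-up matches the paper's proof.
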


\begin{proof}
Let $f_{\mathcal{C}} \in \C[x,y,z]$ be the defining polynomial of $\mathcal{C}$. We may assume that $\alpha_L = z$. Then $f_{\mathcal{C}} = zf$ with $f \in \C[x,y,z]$, where $f$ is the defining polynomial for $\mathcal{C}'$. It is easy to check that if $\theta' \in  D(\mathcal{C}')$, then $z \theta' \in  D(\mathcal{C})$. Also, notice that the multiplication map by $z$ is obviously injective.\\

\noindent Let $\theta \in D(\mathcal{C})$. Then $f_{\mathcal{C}}$ divides $\theta(f_{\mathcal{C}})$. This implies, since ${\rm gcd}(z,f)=1$, that $z$ divides $\theta(z)$ and $f$ divides $\theta(f)$. Let
 $$\theta = f_1 \partial_x + f_2 \partial_y + f_3 \partial_z \in \Ker(\rho).$$
 Since $z$ divides $\theta(z)$, we get that $z\, | \,f_3 $. Now from $$\rho(\theta) = f_1(x,y,0) \partial_x + f_2(x,y,0) \partial_y = 0$$ it follows that  $z\, | f_1$ and $z\, | f_2 $. Then $\theta = z 
 \theta'$. Since  $f$ divides $\theta(f) = z \theta'(f)$ and ${\rm gcd}(z,f) = 1$, it follows that $f$ divides $\theta'(f)$, so $\theta' \in D(\mathcal{C}')$. In conclusion, we obtain $\im(\cdot z) = \im(\cdot \alpha_L) \supset \Ker(\rho)$. The converse inclusion is obvious, so we have $\im(\cdot \alpha_L) = \Ker(\rho)$.\\

Let us check that the map $\rho$ is well-defined, i.e., for $\theta \in D(\mathcal{C})$ we have $\rho(\theta) \in D(\mathcal{C}'')$.
For $X \in \mathcal{C}''$ having a defining equation $\gamma_X =0, \; \gamma_X \in \mathbb{C}[x,y]$, we have that $\gamma_X$ divides $f(x,y,0)$. Let $k \in \mathbb{N}_{\geq 1}$ be maximal such that 
$$f(x,y,0) = \gamma_X^k h, \; h \in \C[x,y], \; {\rm gcd}(\gamma_X, h)=1.$$
Evaluating at $z=0$ the expression $f\, | \,\theta(f)$ one gets $f(x,y,0) \, | \, \rho(\theta)(f(x,y,0))$. 
Denote furthermore $\overline{\theta}:= \rho(\theta)$. We have the divisibility  $$\gamma_X^k h \, | \, \overline{\theta}(\gamma_X^k h)$$ and we need to show that $$\gamma_X \, | \, \overline{\theta}(\gamma_X).$$ 
Observe that $\overline{\theta}(\gamma_X^k h) = \gamma_X^k \overline{\theta}(h) + h \overline{\theta}(\gamma_X^k)$ and since ${\rm gcd}(\gamma_X, h)=1$, one has $\gamma_X^k \, | \, \overline{\theta}(\gamma_X^k)$. If $k=1$, then we are done. Otherwise, if $k>1$, we compute $\overline{\theta}(\gamma_X^k) = k \gamma_X^{k-1} \overline{\theta}(\gamma_X)$ to conclude that $\gamma_X\, | \, \overline{\theta}(\gamma_X)$. Since this holds for an arbitrary $X \in \mathcal{C}''$, it follows $\overline{\theta} \in D(\mathcal{C}'')$.
\end{proof}

\begin{remark}
\label{rem:pi_L_restr}
If one takes $\mathcal{E}$ from Theorem \ref{thm:VB_split} to be sheaf $\mathcal{E}_{\mathcal{C}}$ of logarithmic vector fields along $\mathcal{C}$, which is a locally free sheaf, i.e., a vector bundle by \cite{S}, then $\pi_L$ is actually the restriction to $D_0(\mathcal{C})$ of the map $\rho$ from \eqref{eq:ex_seq} via the identification of $\Gamma_*(\widetilde{D_0(\mathcal{C})})$ to $D_{0}(\mathcal{C})$, recalled in Remark \ref{rem:sections}.
\end{remark}

\section{Main results}
\label{sec:main}

\begin{theorem}
\label{thm:add_converse}
Let $\mathcal{C}$ be a plus-one generated reduced curve in $\mathbb{P}^2$ with the exponents $(d_2, d_3)$ and level $d$ such that there exists a line $L \subset \mathbb{P}^2$ that is an irreducible component of $\mathcal{C}$, $\mathcal{C} =  \mathcal{C}' \cup \{L\}$, $L$ is not included in $\mathcal{C}'$, and $|\mathcal{C}''| + \epsilon(\mathcal{C}', L)= d+1$, where $\mathcal{C}'' \coloneqq \mathcal{C}' \cap L$. Then $\mathcal{C}'$ is free with the exponents $(d_2-1, d_3-1)$.
\end{theorem}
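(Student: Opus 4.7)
The plan is to reduce the statement to a Chern class computation and then promote the matching of Chern invariants to an actual splitting of $\mathcal{E}_{\mathcal{C}'}$. The hypothesis $|\mathcal{C}''| + \epsilon(\mathcal{C}', L) = d + 1$ turns the exact sequence \eqref{eq:sheaves_ex_seq_general} from Remark \ref{rem:quasi} into
\[
0 \to \mathcal{E}_{\mathcal{C}'}(-1) \xrightarrow{\alpha_L} \mathcal{E}_{\mathcal{C}} \to \mathcal{O}_L(-d) \to 0,
\]
which is the main input throughout.

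For the Chern class step, I would first compute $c(\mathcal{O}_L(-d)) = 1 + t + (d+1)t^2$ from the standard resolution $0 \to \mathcal{O}_{\mathbb{P}^2}(-d-1) \to \mathcal{O}_{\mathbb{P}^2}(-d) \to \mathcal{O}_L(-d) \to 0$ and multiplicativity. Combining \eqref{eq:chern_poly_product}, \eqref{eq:c_2}, Lemma \ref{lemma: E(-1)}, Remark \ref{rem:first_chern_number} (which gives $c_1(\mathcal{E}_{\mathcal{C}}) = 1 - d_2 - d_3$, since for a plus-one generated curve $\deg(\mathcal{C}) = d_2 + d_3$), and Proposition \ref{prop:chern_2} for $c_2(\mathcal{E}_{\mathcal{C}})$, one solves for
\[
c_1(\mathcal{E}_{\mathcal{C}'}) = -(d_2-1)-(d_3-1), \qquad c_2(\mathcal{E}_{\mathcal{C}'}) = (d_2-1)(d_3-1),
\]
which are precisely the Chern numbers of the split bundle $\mathcal{O}(-(d_2-1)) \oplus \mathcal{O}(-(d_3-1))$.

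To upgrade this to genuine splitting, I plan to invoke Yoshinaga's Theorem \ref{thm:VB_split} for $\mathcal{E}_{\mathcal{C}'}$ along a generic line $L_0 \neq L$. Since $L_0$ meets $L$ transversally in a single point $p$, the restriction $\mathcal{O}_L(-d)|_{L_0}$ is the length-one skyscraper $\mathbb{C}_p$ and $\mathrm{Tor}_1(\mathcal{O}_L(-d), \mathcal{O}_{L_0}) = 0$, so the sequence above restricts cleanly to
\[
0 \to \mathcal{E}_{\mathcal{C}'}(-1)|_{L_0} \to \mathcal{E}_{\mathcal{C}}|_{L_0} \to \mathbb{C}_p \to 0
\]
on $L_0 \cong \mathbb{P}^1$. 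The generic splitting type of $\mathcal{E}_{\mathcal{C}}$ is known from Corollary \ref{cor:generic_spl_type}, and together with the classification of rank-$2$ subbundles of a split rank-$2$ bundle on $\mathbb{P}^1$ whose cokernel has length one, the possible splitting types of $\mathcal{E}_{\mathcal{C}'}$ on $L_0$ reduce to a short list. The bound $mdr(\mathcal{C}') \geq d_2 - 1$, immediate from the injection $\cdot \alpha_L : D_0(\mathcal{C}') \hookrightarrow D_0(\mathcal{C})$ of Proposition \ref{prop:deriv_seq} (a degree-$k$ derivation in $D_0(\mathcal{C}')$ produces a degree-$(k+1)$ derivation in $D_0(\mathcal{C})$, forcing $k+1 \geq mdr(\mathcal{C}) = d_2$), combined with \cite[Proposition 3.2(1)]{AD}, selects the splitting $(d_2-1, d_3-1)$, whose product equals $c_2(\mathcal{E}_{\mathcal{C}'})$. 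Yoshinaga's criterion then yields the splitting of $\mathcal{E}_{\mathcal{C}'}$, and Remark \ref{rem:sections} together with the Chern data gives that $\mathcal{C}'$ is free with exponents $(d_2-1, d_3-1)$.

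I expect the main obstacle to be precisely this final step: equality of Chern numbers with those of a split bundle is necessary but not sufficient, and the elimination of the alternative splitting types on $L_0$ needs both the $c_2$ datum and the lower bound on $mdr(\mathcal{C}')$, with delicate bookkeeping in the borderline cases of equal exponents $d_2 = d_3$ or of even degree $\deg(\mathcal{C}')$, paralleling the case split in Proposition \ref{prop:chern_2}.
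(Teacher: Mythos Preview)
Your proposal follows essentially the same strategy as the paper: compute $c_2(\mathcal{E}_{\mathcal{C}'})=(d_2-1)(d_3-1)$ from the exact sequence \eqref{eq:sheaves_ex_seq_general}, then show the generic splitting type of $\mathcal{E}_{\mathcal{C}'}$ equals $(d_2-1,d_3-1)$ and conclude via Yoshinaga's Theorem \ref{thm:VB_split}.

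The difference lies only in how you pin down the generic splitting type. The paper argues directly: from $c_2(\mathcal{E}_{\mathcal{C}'})=(d_2-1)(d_3-1)\geq d_2^K d_3^K$ and $d_2^K+d_3^K=d_2+d_3-2$ one gets $d_2^K\leq d_2-1$; then \cite[Proposition 3.2(2)--(4)]{AD} together with $mdr(\mathcal{C}')\in\{d_2-1,d_2\}$ (from \cite[Proposition 3.1]{DIS}) forces $d_2^K=d_2-1$ after a short case split. Your route inserts an extra step, restricting the exact sequence to a generic $L_0$ to obtain a length-one elementary modification on $\mathbb{P}^1$, which narrows the candidates for $\mathcal{E}_{\mathcal{C}'}|_{L_0}$ to two. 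This is valid, but it is a detour: you still need the same \cite{AD} case analysis to eliminate the wrong candidate, as you yourself note in the final sentence.

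One small correction: invoking only \cite[Proposition 3.2(1)]{AD} (the inequality $d_2^{L_0}\leq mdr(\mathcal{C}')$) together with your lower bound $mdr(\mathcal{C}')\geq d_2-1$ does \emph{not} by itself bound $d_2^{L_0}$ from below, so it cannot ``select'' $(d_2-1,d_3-1)$ over $(d_2-2,d_3)$ in the balanced case $d_2=d_3$. You need parts (2)--(4) of that proposition, which give $d_2^{L_0}=mdr(\mathcal{C}')$ away from the borderline and explicit values in the borderline cases; this is precisely the case split you allude to at the end, and it is what the paper does.
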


\begin{proof}

 A computation of the second Chern number of $\mathcal{E}_{\mathcal{C}'}$, using \eqref{eq:c_2}
and \eqref{eq:sheaves_ex_seq_general} and taking into account that $d+1=|\mathcal{C}''| + \epsilon(\mathcal{C}', L)$, shows that 
$$c_2(\mathcal{E}_{\mathcal{C}'}) = (d_2-1) (d_3-1)$$

To prove that $\mathcal{C}'$ is free, it would be enough to show that there is a line $K$ in $\mathbb{P}^2$ such that the splitting type of $\mathcal{E}_{\mathcal{C}'}$ onto $K$ is equal to $(d_2-1, d_3-1)$. We will show that $(d_2-1, d_3-1)$ is the generic splitting type of $\mathcal{E}_{\mathcal{C}'}$ onto a line. Let $K$ be a generic line, with corresponding splitting type $(d_2^K, d_3^K), \; d_2^K \leq d_3^K$.

We may assume without loss of generality that $d_2 \leq d_3$.
Since $d_2^K + d_3^K = d_2 + d_3 -2$ and $(d_2-1) (d_3-1) \geq d_2^K  d_3^K$ by Theorem \ref{thm:VB_split}, it follows that
$$d_2^K \leq d_2 - 1$$
From \cite[Proposition 3.2 (2)-(4)]{AD}, we know that $d_2^K = mdr(\mathcal{C}')$ except possibly when $\deg(\mathcal{C}')=2m$ and  $d_2^K = m-1$ or  $\deg(\mathcal{C}')=2m+1$ and $d_2^K = m$. We will treat separately these cases.\\

\noindent First assume $d_2^K = mdr(\mathcal{C}')$. By \cite[Proposition 3.1]{DIS}, $mdr(\mathcal{C}') \in \{d_2-1, d_2 \}$. If $mdr(\mathcal{C}')  = d_2-1$ then $d_2^K = d_2-1$, so $(d_2^K, d_3^K) = (d_2-1, d_3-1)$, which proves the claim.

\noindent If $mdr(\mathcal{C}')  = d_2$, then $d_2^K = d_2$, but this contradicts the inequality $d_2^K \leq d_2 - 1$, so the case  $mdr(\mathcal{C}')  = d_2$ cannot happen.\\

Case $\deg(\mathcal{C}')=2m, \; d_2^K = m-1$. Then $m \leq d_2$, but $\deg(\mathcal{C}) = 2m+1$, so $d_2 = m$. It follows $(d_2^K, d_3^K) = (d_2-1, d_3-1)$, which proves the claim.\\

 Case $\deg(\mathcal{C}')=2m+1, \; d_2^K = m$. Then $m+1 \leq d_2$,  but $\deg(\mathcal{C}) = 2m+2$, so $d_2 = m+1$. Again it follows that $(d_2^K, d_3^K) = (d_2-1, d_3-1)$, which completes our proof.\\
\end{proof}
It has come to our attention that the next result is also proved in a more recent version of \cite{Dimca}, but with a different proof.
\begin{theorem}
\label{thm:exp_computation}
Let $\mathcal{C}$ be a reduced free curve with the exponents $(d_2, d_3)$, not necessarily ordered, such that $\mathcal{C} = \mathcal{C}' \cup L$ for some line $L \subset \mathbb{P}^2, \; L \not\subset \mathcal{C}'$. Let $\mathcal{C}'' \coloneqq  \mathcal{C}' \cap L$.
\begin{enumerate}
    \item[1.] If $\mathcal{C}'$ is free, then $\exp(\mathcal{C}') = (d_2, d_3-1)$ and $|\mathcal{C}''|  + \epsilon(\mathcal{C}',L)= d_2+1$.
    \item[2.] If $\mathcal{C}'$ is plus-one generated, then $\exp(\mathcal{C}') = (d_2, d_3)$ and the level $d= \deg(\mathcal{C}') - | \mathcal{C}''| - \epsilon(\mathcal{C}',L)$.
\end{enumerate}
\end{theorem}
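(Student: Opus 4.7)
The plan is to extract both conclusions from a single Chern-class identity derived from the exact sequence \eqref{eq:sheaves_ex_seq_general}. Applying equation \eqref{eq:c_2} to that sequence, and using Lemma \ref{lemma: E(-1)}, Remark \ref{rem:first_chern_number}, and the Chern polynomial $c(\mathcal{O}_L(k)) = 1 + t + (1-k)t^2$ (obtained from the resolution $0 \to \mathcal{O}_{\mathbb{P}^2}(-1) \to \mathcal{O}_{\mathbb{P}^2} \to \mathcal{O}_L \to 0$ tensored with $\mathcal{O}_{\mathbb{P}^2}(k)$), a short computation yields the master identity
\[
c_2(\mathcal{E}_{\mathcal{C}'}) = d_2 d_3 + 1 - |\mathcal{C}''| - \epsilon(\mathcal{C}', L).
\]

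For Part 1, the cleanest route is to run Dimca's addition theorem \ref{thm:add} in reverse. Apply it to the free curve $\mathcal{C}'$ and the line $L$: the curve $\mathcal{C}' \cup L = \mathcal{C}$ must be free or plus-one generated, and since $\mathcal{C}$ is free by hypothesis only the free branch of \ref{thm:add} is permitted. That branch states $\exp(\mathcal{C}) = (a, b+1)$ and $a + 1 = |\mathcal{C}''| + \epsilon(\mathcal{C}', L)$, where $(a, b) = \exp(\mathcal{C}')$ with $a \leq b$. Relabelling the unordered pair $(d_2, d_3) := (a, b+1)$ translates this directly into $\exp(\mathcal{C}') = (d_2, d_3 - 1)$ and $|\mathcal{C}''| + \epsilon(\mathcal{C}', L) = d_2 + 1$.

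For Part 2, suppose $\mathcal{C}'$ is plus-one generated with sorted exponents $(d_2', d_3')$ and level $d$. Plus-one generated curves satisfy $d_2' + d_3' = \deg(\mathcal{C}')$, and since $\deg(\mathcal{C}) = d_2 + d_3 + 1$ for free $\mathcal{C}$ (with $d_2 \leq d_3$), one gets $d_2' + d_3' = d_2 + d_3$. Combining Proposition \ref{prop:chern_2} with the master identity yields
\[
d = \deg(\mathcal{C}') - |\mathcal{C}''| - \epsilon(\mathcal{C}', L) + (d_2 d_3 - d_2' d_3'),
\]
so both claims of Part 2 reduce to showing $(d_2', d_3') = (d_2, d_3)$.

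The remaining task is to pin down $mdr(\mathcal{C}') = d_2$. For the upper bound, take any nonzero $\theta \in D_0(\mathcal{C})_{d_2}$: from $\theta(f_{\mathcal{C}}) = 0$, $f_{\mathcal{C}} = z f_{\mathcal{C}'}$ and $\gcd(z, f_{\mathcal{C}'}) = 1$ one deduces $z \mid \theta(z)$, whence $\theta \in D(\mathcal{C}')$. Decomposing $\theta = \theta_0 + h\theta_E$ according to $D(\mathcal{C}') = D_0(\mathcal{C}') \oplus S\theta_E$, the case $\theta_0 = 0$ forces $\theta = h\theta_E$, and then $\theta(f_{\mathcal{C}}) = h\deg(\mathcal{C}) f_{\mathcal{C}} = 0$ gives $h = 0$, contradicting $\theta \neq 0$. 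Hence $\theta_0 \in D_0(\mathcal{C}')_{d_2} \setminus \{0\}$, so $mdr(\mathcal{C}') \leq d_2$. For the reverse bound, \cite[Proposition 3.1]{DIS} gives $mdr(\mathcal{C}') \in \{d_2 - 1, d_2\}$; the residual possibility $mdr(\mathcal{C}') = d_2 - 1$, which would force $(d_2', d_3') = (d_2 - 1, d_3 + 1)$, is what I anticipate as the main obstacle. I would exclude it by restricting \eqref{eq:sheaves_ex_seq_general} to a generic line and confronting the resulting splitting-type constraints on $\mathcal{E}_{\mathcal{C}'}$ with Corollary \ref{cor:generic_spl_type} applied to the plus-one generated $\mathcal{C}'$.
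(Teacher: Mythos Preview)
Your Part 1 is correct and in fact more efficient than the paper's argument: instead of computing Chern polynomials directly, you simply run Dimca's addition theorem (Theorem~\ref{thm:add}) on the free curve $\mathcal{C}'$ and observe that only its ``free'' branch can occur. The paper instead argues via $mdr$-bounds and the Chern identity, but your shortcut is perfectly valid.

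Your master identity and the reduction in Part 2 to the equality $(d_2',d_3')=(d_2,d_3)$ are also correct and agree with the paper's computation. The genuine gap is the final step, where you propose to rule out $d_2'=d_2-1$ by restricting \eqref{eq:sheaves_ex_seq_general} to a generic line $K$ and comparing with Corollary~\ref{cor:generic_spl_type}. This does not yield a contradiction. Restricting the sequence to a generic $K\ne L$ gives an elementary modification
\[
0\longrightarrow \mathcal{E}_{\mathcal{C}'}(-1)|_K\longrightarrow \mathcal{O}_K(-d_2)\oplus\mathcal{O}_K(-d_3)\longrightarrow \mathcal{O}_{L\cap K}\longrightarrow 0,
\]
so $\mathcal{E}_{\mathcal{C}'}|_K$ has splitting type $(d_2,d_3-1)$ or $(d_2-1,d_3)$. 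On the other hand, if $(d_2',d_3')=(d_2-1,d_3+1)$, Corollary~\ref{cor:generic_spl_type} allows the generic splitting type $(d_2',d_3'-1)=(d_2-1,d_3)$, which is one of the two possibilities just listed. So the two constraints are compatible and no contradiction appears; your proposed route stalls precisely at the point you flagged as the obstacle.

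The paper closes this gap by a short algebraic argument. One has $D(\mathcal{C})\subset D(\mathcal{C}')$ since $f_{\mathcal{C}'}\mid f_{\mathcal{C}}$. If $(d_2',d_3')=(d_2-1,d_3+1)$ with $d_2\le d_3$, then the free generators $\theta_2,\theta_3$ of $D(\mathcal{C})$ have degrees $d_2=d_2'+1$ and $d_3=d_3'-1$, both strictly smaller than $d_3'\le d$. Hence $\theta_2,\theta_3$ lie in the $S$-submodule of $D(\mathcal{C}')$ generated by $\theta_E$ and $\theta_2'$ alone, a free module of rank~$2$; but then $\theta_E,\theta_2,\theta_3$ cannot be $S$-independent, contradicting freeness of $\mathcal{C}$. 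This forces $d_2'=d_2$, and the level formula follows from your master identity.
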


\begin{proof}
Assume first $\mathcal{C}'$ is free with the exponents $(d'_2, d'_3), \, d'_2 \leq d'_3$. Notice that $d_2+d_3 = d'_2+d'_3+1$
and recall that by \cite[Proposition 3.1]{DIS}, $mdr(\mathcal{C}) = \min \{d_2, d_3\} \in \{d'_2, d'_2+1\}$. It follows that 
$$(d'_2, d'_3) \in \{(d_2-1, d_3), \; (d_2, d_3-1)\}.$$ 
Assume without loss of generality that $(d'_2, d'_3)=(d_2, d_3-1)$, since $(d_2, d_3)$ are unordered. Then the Chern polynomial has the following form
$$c(\mathcal{E}_{\mathcal{C}'}) = 1 + (-d'_2 - d'_3)t + (d'_2 d'_3) t^2 = 1 + (-d_2 +1 - d_3)t + d_2(d_3-1) t^2$$ and by Lemma \ref{lemma: E(-1)}, 
$$c(\mathcal{E}_{\mathcal{C}'}(-1)) = 1 + (-1-d_2 - d_3)t + (d_2 d_3 + d_3) t^2.$$ Since $\mathcal{C}$ is free with the exponents $(d_2, d_3), \; c(\mathcal{E}_{\mathcal{C}}) = 1+ (-d_2-d_3)t + d_2d_3 t^2$. 
Compute \eqref{eq:c_2}
for the exact sequence \eqref{eq:sheaves_ex_seq_general} to conclude $|\mathcal{C}''| + \epsilon(\mathcal{C}',L) = d_2+1$. \\

Assume next that $\mathcal{C}'$ is plus-one generated with $\exp(\mathcal{C}') = (d'_2, d'_3), \, d'_2 \leq d'_3,$ and level $d$. Choose an order among the exponents of $\mathcal{C}$, say $d_2 \leq d_3$.

 We have $d_2+d_3 = d'_2+d'_3$ and, by \cite[Proposition 3.1]{DIS}, $mdr(\mathcal{C}) = d_2 \in \{d'_2, d'_2+1\}$. Assume that $d_2 = d'_2 +1$. Since $f_{\mathcal{C}'} \;  | \; f_\mathcal{C}$,  we have an inclusion between modules of derivations:
$$D(\mathcal{C}) \subset D(\mathcal{C}')$$
Since $\mathcal{C}$ is free with the exponents $(d_2, d_3)$, the module $D(\mathcal{C})$ is spanned by $\theta_E$ and two other homogeneous derivations $\theta_2, \theta_3$ of degrees $d_2 = d'_2+1$, respectively $d_3=d'_3-1$. The module of derivations $D(\mathcal{C}')$ of the plus-one generated curve $\mathcal{C}'$ is spanned by  $\theta_E$ and three other homogeneous derivations $\theta'_2, \theta'_3, \psi$ of degrees $d'_2, \; d'_3$,  respectively $d$. Due to degree constraints, both $\theta_2, \theta_3$ are in the submodule of $D(\mathcal{C}')$ spanned by $ \theta_E$ and  $\theta'_2$, but this implies that $\theta_2, \theta_3, \theta_E$ are not S-independent, contradiction. 
Then necessarily $d_2 = d'_2$, so 
$$\exp(\mathcal{C}') = (d_2, d_3)$$ It remains to prove the formula for the level $d$.

By Proposition \ref{prop:chern_2}, 
$$c(\mathcal{E}_{\mathcal{C}'}) = 1 + (1-d_2 - d_3)t + (d_2 (d_3-1) + d-d_3+1) t^2.$$

By Lemma \ref{lemma: E(-1)},  
$$c(\mathcal{E}_{\mathcal{C}'}(-1)) = 1 + (-1-d_2 - d_3)t + (d_2 d_3 + d+1) t^2.$$

Compute again \eqref{eq:c_2}
for the exact sequence \eqref{eq:sheaves_ex_seq_general}, and the equality $d= \deg(\mathcal{C}') - | \mathcal{C}''| - \epsilon(\mathcal{C}',L)$ follows.
\\
\end{proof}

\begin{theorem}
\label{thm:del_inv}
Let $\mathcal{C}'$ be a plus-one generated curve with the exponents $(d_2, d_3)$ and level $d=\deg(\mathcal{C}')-|\mathcal{C}''| - \epsilon(\mathcal{C}',L), \mathcal{C}''\coloneqq  \mathcal{C}' \cap L$ for some line $L \subset \mathbb{P}^2$ such that $L$ is not included in  $\mathcal{C}'$. Then $\mathcal{C} := \mathcal{C}' \cup L$ is free with the exponents $(d_2, d_3)$.
\end{theorem}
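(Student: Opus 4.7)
The plan is to mirror the approach of Theorem \ref{thm:add_converse}, with the roles of $\mathcal{C}$ and $\mathcal{C}'$ swapped: first compute $c_2(\mathcal{E}_{\mathcal{C}})$ from the exact sequence \eqref{eq:sheaves_ex_seq_general}, then show that the generic splitting type of $\mathcal{E}_{\mathcal{C}}$ equals $(d_2, d_3)$; freeness of $\mathcal{C}$ with those exponents will then follow from Theorem \ref{thm:VB_split} together with Remark \ref{rem:sections}.

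For the Chern class computation, I would combine Proposition \ref{prop:chern_2}, which gives $c_2(\mathcal{E}_{\mathcal{C}'}) = d_2(d_3 - 1) + d - d_3 + 1$, with Remark \ref{rem:first_chern_number} and Lemma \ref{lemma: E(-1)} to obtain $c_2(\mathcal{E}_{\mathcal{C}'}(-1)) = d_2 d_3 + d + 1$. Plugging into \eqref{eq:c_2} for the sequence \eqref{eq:sheaves_ex_seq_general}, using $c_1(\mathcal{O}_L(k)) = 1$ and $c_2(\mathcal{O}_L(k)) = 1 - k$ (computed from the standard resolution of $\mathcal{O}_L(k)$ on $\mathbb{P}^2$) with $k = 1 - |\mathcal{C}''| - \epsilon(\mathcal{C}', L)$, produces
\[ c_2(\mathcal{E}_{\mathcal{C}}) = d_2 d_3 + d - d_2 - d_3 + |\mathcal{C}''| + \epsilon(\mathcal{C}', L), \]
which collapses to $d_2 d_3$ after substituting $|\mathcal{C}''| + \epsilon(\mathcal{C}', L) = d_2 + d_3 - d$, i.e., the hypothesis on the level.

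Assuming without loss of generality $d_2 \leq d_3$, I would next verify that the generic splitting type $(d_2^K, d_3^K)$, $d_2^K \leq d_3^K$, equals $(d_2, d_3)$. The identity $d_2 d_3 - d_2^K d_3^K = (d_2 - d_2^K)(d_3 - d_2^K)$, combined with Theorem \ref{thm:VB_split} (giving $d_2^K d_3^K \leq c_2(\mathcal{E}_{\mathcal{C}}) = d_2 d_3$) and $d_2^K + d_3^K = d_2 + d_3$, forces $d_2^K \leq d_2$. For the reverse bound, I would first show $mdr(\mathcal{C}) \geq d_2$ via the inclusion $D(\mathcal{C}) \subseteq D(\mathcal{C}')$---valid because any $\theta \in D(\mathcal{C})$ satisfies $\alpha_L \mid \theta(\alpha_L)$ and hence $\theta(f_{\mathcal{C}'}) \in S f_{\mathcal{C}'}$---together with the observation that every element of $D(\mathcal{C}')$ of degree $< d_2$ is a polynomial multiple of $\theta_E$, and no nonzero such multiple lies in $D_0(\mathcal{C})$. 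The equality $d_2^K = d_2$ then follows by case analysis on $\deg(\mathcal{C}) = d_2 + d_3 + 1$ using \cite[Proposition 3.2]{AD}: in Case 1 ($d_3 > d_2 + 1$) one has $d_2^K = mdr(\mathcal{C}) \geq d_2$; in the boundary subcase $d_3 = d_2 + 1$ one lies in Case 2 with $d_2^K = \deg(\mathcal{C})/2 - 1 = d_2$; and in the subcase $d_3 = d_2$ one lies in Case 3 with $d_2^K = (\deg(\mathcal{C}) - 1)/2 = d_2$. Hence $d_2^K d_3^K = c_2(\mathcal{E}_{\mathcal{C}})$, and by Theorem \ref{thm:VB_split} together with Remark \ref{rem:sections}, $\mathcal{C}$ is free with exponents $(d_2, d_3)$.

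The main obstacle is the parity-sensitive case analysis via \cite[Proposition 3.2]{AD}, where one must rule out $d_2^K < d_2$ even in regimes where the ``rigid'' Cases 2 or 3 of that proposition apply. A subsidiary but necessary technical point is the bound $d \geq d_2$ for the level of a plus-one generated curve, which is needed to conclude that $D(\mathcal{C}')_{<d_2}$ consists solely of polynomial multiples of $\theta_E$.
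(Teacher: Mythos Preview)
Your proposal is correct and follows essentially the same strategy as the paper: compute $c_2(\mathcal{E}_{\mathcal{C}}) = d_2 d_3$ from the exact sequence \eqref{eq:sheaves_ex_seq_general} via Proposition \ref{prop:chern_2} and Lemma \ref{lemma: E(-1)}, then pin down the generic splitting type as $(d_2, d_3)$ using Theorem \ref{thm:VB_split} together with the case analysis from \cite[Proposition 3.2]{AD}. The only difference is that the paper obtains $mdr(\mathcal{C}) \in \{d_2, d_2+1\}$ by citing \cite[Proposition 3.1]{DIS}, whereas you derive $mdr(\mathcal{C}) \geq d_2$ directly from the inclusion $D(\mathcal{C}) \subseteq D(\mathcal{C}')$ (using $d \geq d_3 \geq d_2$); this is a minor, more self-contained variant of the same argument.
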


\begin{proof}
Using Proposition \ref{prop:chern_2} and Lemma \ref{lemma: E(-1)}, we get that 
$$c(\mathcal{E}_{\mathcal{C}'}(-1)) = 1 + (-1-d_2 - d_3)t + (d_2 d_3 + d+1) t^2.$$

A computation of the second Chern number of $\mathcal{E}_{\mathcal{C}}$, using \eqref{eq:c_2}
and \eqref{eq:sheaves_ex_seq_general} and taking into account that $d=\deg(\mathcal{C}')-|\mathcal{C}''| - \epsilon(\mathcal{C}',L)$, shows that 
$$c_2(\mathcal{E}_{\mathcal{C}}) = d_2 d_3$$

By Theorem \ref{thm:VB_split}, to prove that $\mathcal{C}$ is free with the exponents $(d_2, d_3)$, it would be enough to show that the generic splitting type of $\mathcal{E}_{\mathcal{C}}$ onto a projective line is $(d_2, d_3)$.\\

Assume without loss of generality $d_2 \leq d_3$. Let $(d_2^K, d_3^K), \, d_2^K \leq  d_3^K,$ be the splitting type of $\mathcal{E}_{\mathcal{C}}$  onto some generic line $K \subset \mathbb{P}^2$. In any case, from Theorem \ref{thm:VB_split} it follows that $$d_2 d_3 \geq d_2^K d_3^K$$ Since $d_2 + d_3 = d_2^K + d_3^K$, it follows that $d_2^K  \leq d_2$. From \cite[Proposition 3.2 (2)-(4)]{AD} (see the proof of Proposition \ref{prop:chern_2} for details) we know that $d_2^K = mdr(\mathcal{C})$ except possibly when $\deg(\mathcal{C})=2m$ and  $d_2^K = m-1$ or  $\deg(\mathcal{C})=2m+1$ and $d_2^K = m$. We will treat separately these cases.\\

\noindent For now, assume $d_2^K = mdr(\mathcal{C})$.
Since $mdr(\mathcal{C}') = d_2$, then $mdr(\mathcal{C}) \in  \{d_2, d_2 +1 \}$. But $mdr(\mathcal{C}) = d_2+1$ is not possible, since $mdr(\mathcal{C}) = d_2^K \leq d_2$. So $mdr(\mathcal{C}) = d_2 = d_2^K$. But this means that the generic splitting type of  $\mathcal{E}_{\mathcal{C}}$ onto a line is $(d_2, d_3)$, and we are done.\\

Case $\deg(\mathcal{C})=2m, \; d_2^K = m-1$. Notice that $d_2+d_3 = 2m-1$, so $d_2 \leq m-1$. But $d_2 \geq d_2^K = m-1$, so $d_2 = m-1$. It follows $(d_2^K, d_3^K) = (d_2, d_3)$, and we are done.\\

 Case $\deg(\mathcal{C})=2m+1, \; d_2^K = m$. Notice that $d_2+d_3 = 2m$, so $d_2 \leq m$. In fact, $d_2=m$ since $d_2 \geq d_2^K$. Then  $(d_2^K, d_3^K) = (d_2, d_3)$, and this concludes the proof.
\end{proof}

\begin{theorem}
\label{thm_add_del_converse_free}
Let $(\mathcal{C}, \mathcal{C}', \mathcal{C}'')$ be a triple with respect to a line $L \subset \mathbb{P}^2$, such that $|\mathcal{C}''|+ \epsilon(\mathcal{C}', L) = d_2+1$. Then the following are equivalent:
\begin{enumerate}
\item[(1)] $\mathcal{C}'$ is free of exponents $(d_2, d_3)$.
\item[(2)] $\mathcal{C}$ is free of exponents $(d_2, d_3+1)$.
\end{enumerate}
\end{theorem}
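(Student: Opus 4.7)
The plan is to prove both implications by invoking Dimca's addition and deletion theorems to reduce each direction to two possibilities (free or plus-one generated) and then excluding the plus-one generated alternative via a comparison between the level and the larger exponent. Throughout I may assume $d_2\le d_3$, which costs no generality since the roles of the two exponents can be exchanged.

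For $(1)\Rightarrow(2)$: apply Theorem~\ref{thm:add} to the free curve $\mathcal{C}'$ of exponents $(d_2,d_3)$ and the line $L$. The conclusion is that $\mathcal{C}=\mathcal{C}'\cup L$ is either free with exponents $(d_2,d_3+1)$, which is exactly (2), or plus-one generated with exponents $(d_2+1,d_3+1)$ and level $d=|\mathcal{C}''|+\epsilon(\mathcal{C}',L)-1=d_2$. To discard the second alternative, I rely on the fact that for a plus-one generated curve with ordered exponents $a\le b$ and level $\ell$ one has $\ell\ge b$; this is built into the Dimca--Sticlaru minimal free resolution $0\to S(-\ell-1)\to S(-a)\oplus S(-b)\oplus S(-\ell)\to D_0(\mathcal{C})\to 0$, cf.~\cite{DS0}. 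In the putative plus-one generated scenario this would give $d_2=d\ge d_3+1$, contradicting $d_2\le d_3$, so $\mathcal{C}$ must be free of exponents $(d_2,d_3+1)$.

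For $(2)\Rightarrow(1)$: apply \cite[Theorem 1.1]{Dimca} to the free curve $\mathcal{C}$ of exponents $(d_2,d_3+1)$ and its component $L$, so that $\mathcal{C}'$ is free or plus-one generated. If it is free, Theorem~\ref{thm:exp_computation}(1), with the labelling of the exponents of $\mathcal{C}$ fixed by the hypothesis $|\mathcal{C}''|+\epsilon(\mathcal{C}',L)=d_2+1$, yields $\exp(\mathcal{C}')=(d_2,d_3)$, which is (1). If it were plus-one generated, Theorem~\ref{thm:exp_computation}(2) would give $\exp(\mathcal{C}')=(d_2,d_3+1)$ and level $d=\deg(\mathcal{C}')-|\mathcal{C}''|-\epsilon(\mathcal{C}',L)=d_3$; but the same bound $d\ge\max\{d_2,d_3+1\}\ge d_3+1$ contradicts $d=d_3$. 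Hence $\mathcal{C}'$ is free with the claimed exponents, completing the equivalence.

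The main obstacle is the level-versus-exponent inequality, which is the sole ingredient ruling out the plus-one generated alternative in both directions and which is not stated as a standalone lemma in Section~\ref{sec:prelim}; it is, however, present in the Dimca--Sticlaru definition and tacitly used in the degree argument in the proof of Theorem~\ref{thm:exp_computation}(2). If one wished to avoid it entirely in $(1)\Rightarrow(2)$, one could mimic the Chern-class template of Theorem~\ref{thm:del_inv}: the exact sequence \eqref{eq:sheaves_ex_seq_general} together with Lemma~\ref{lemma: E(-1)} gives $c_2(\mathcal{E}_{\mathcal{C}})=d_2(d_3+1)$, and combining this with the $mdr$ bounds from the literature and the generic-splitting-type analysis of \cite[Proposition 3.2]{AD} forces the generic splitting type of $\mathcal{E}_{\mathcal{C}}$ to be $(d_2,d_3+1)$, whence $\mathcal{C}$ is free by Theorem~\ref{thm:VB_split}. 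For $(2)\Rightarrow(1)$ the analogous Chern-class computation produces the expected Chern polynomial for $\mathcal{E}_{\mathcal{C}'}$, but on its own it cannot distinguish a free $\mathcal{C}'$ of exponents $(d_2,d_3)$ from a plus-one generated $\mathcal{C}'$ of exponents $(d_2,d_3+1)$ and level $d_3$, so the level inequality appears essential there.
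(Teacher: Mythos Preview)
Your proof is correct and follows essentially the same route as the paper: apply Dimca's addition/deletion results together with Theorem~\ref{thm:exp_computation} to reduce each direction to two alternatives, then exclude the plus-one generated option via the inequality $d\ge\max\{d_2,d_3\}$. Your explicit justification of this inequality via the Dimca--Sticlaru resolution is what the paper uses tacitly, and your computation in $(2)\Rightarrow(1)$ (exponents $(d_2,d_3+1)$, level $d=d_3$) is in fact the correct reading of Theorem~\ref{thm:exp_computation}(2); the paper's stated exponents ``$(d_2,d_3)$'' and level ``$d_3-1$'' there appear to be a slip in applying that theorem with $(d_2,d_3+1)$ in place of $(d_2,d_3)$, though the contradiction and conclusion are unaffected.
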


\begin{proof}
$(1)  \Rightarrow (2)$ Assume $\mathcal{C}'$ is free with the exponents $(d_2, d_3)$. We already know that  $\mathcal{C} =  \mathcal{C}' \cup L$ must be either free or plus-one generated. If it is free, then, since  $|\mathcal{C}''|+ \epsilon(\mathcal{C}', L) = d_2+1$, its exponents are  $(d_2, d_3+1)$ by \cite[Theorem 1.3]{Dimca}.

If $\mathcal{C}$ would be plus-one generated, then its exponents would be $(d_2+1, d_3+1)$ and level $d$ such that  $|\mathcal{C}''|+ \epsilon(\mathcal{C}', L) = d+1$. But then $d=d_2$, so $d < d_2+1$, i.e., the level is strictly smaller than one of the exponents of the plus-one generated $\mathcal{C}$, a contradiction.\\

$(2)  \Rightarrow (1)$ Assume $\mathcal{C}$ is free of exponents $(d_2, d_3+1)$. We already know that  $\mathcal{C}'$ must be either free or plus-one generated. If it is free, then, since  $|\mathcal{C}''|+ \epsilon(\mathcal{C}', L) = d_2+1$, by Theorem \ref{thm:exp_computation}, it must be free with the 
exponents $(d_2, d_3)$. 
 
 If $\mathcal{C}'$ would be plus-one generated  plus-one generated, then, again by  Theorem \ref{thm:exp_computation}, its exponents would be $(d_2, d_3)$ and its level $d= \deg(\mathcal{C}') - |\mathcal{C}''|-  \epsilon(\mathcal{C}', L)$. Then $d = d_3 - 1 < d_3$, contradiction.
 \end{proof}
 
 \begin{theorem}
 \label{thm:char_free_deletion}
 Let $\mathcal{C}$ be a reduced curve such that $\mathcal{C} = \mathcal{C}' \cup L$ for some line $L \subset \mathbb{P}^2, \; L \not\subset \mathcal{C}'$.The following are equivalent:
\begin{enumerate}
    \item[1.] $\mathcal{C}$ is free with exponents $(d_2, d_3)$.
    \item[2.] $\mathcal{C}'$ is either 
    \begin{enumerate}
      \item[(i)]  free with  $\exp(\mathcal{C}') = (d_2, d_3-1)$ and $|\mathcal{C}' \cap L|  + \epsilon(\mathcal{C}',L)= d_2+1$ or 
      \item[(ii)] plus-one generated with  $\exp(\mathcal{C}') = (d_2, d_3)$ and level $d= \deg(\mathcal{C}) -1 - |\mathcal{C}' \cap L| - \epsilon(\mathcal{C}',L)$.
    \end{enumerate}
  \end{enumerate}
 \end{theorem}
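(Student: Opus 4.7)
The plan is to assemble the results already established in the paper; no new computation is needed. For the implication $(1) \Rightarrow (2)$, I would first invoke Dimca's deletion theorem \cite[Theorem 1.1]{Dimca} to obtain the dichotomy that $\mathcal{C}'$ is either free or plus-one generated, and then apply Theorem \ref{thm:exp_computation} to read off the exponents in the free case and the exponents together with the level in the plus-one generated case. The only bookkeeping needed is to rewrite $\deg(\mathcal{C}')$ as $\deg(\mathcal{C}) - 1$, so that the level formula from Theorem \ref{thm:exp_computation}(2) matches the one stated in case (2)(ii) of our theorem.

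For the converse $(2) \Rightarrow (1)$, the plan is to dispatch each subcase to a different earlier result. In case (2)(i), where $\mathcal{C}'$ is free with exponents $(d_2, d_3-1)$ and $|\mathcal{C}' \cap L| + \epsilon(\mathcal{C}', L) = d_2 + 1$, I would apply the implication $(1) \Rightarrow (2)$ of Theorem \ref{thm_add_del_converse_free}, identifying its ``$d_3$'' with $d_3 - 1$ in our notation, to conclude that $\mathcal{C} = \mathcal{C}' \cup L$ is free with exponents $(d_2, (d_3-1)+1) = (d_2, d_3)$. In case (2)(ii), where $\mathcal{C}'$ is plus-one generated with $\exp(\mathcal{C}') = (d_2, d_3)$ and level $d = \deg(\mathcal{C}) - 1 - |\mathcal{C}' \cap L| - \epsilon(\mathcal{C}', L)$, I would first rewrite the level hypothesis as $d = \deg(\mathcal{C}') - |\mathcal{C}' \cap L| - \epsilon(\mathcal{C}', L)$, bringing it into the form assumed by Theorem \ref{thm:del_inv}; that theorem then concludes directly that $\mathcal{C}$ is free with exponents $(d_2, d_3)$.

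The main (mild) obstacle is purely notational: the exponent pair of $\mathcal{C}$ is treated as unordered throughout, whereas Theorem \ref{thm_add_del_converse_free} singles out a specific ``$d_2$'' via the intersection-plus-$\epsilon$ condition. This is managed simply by labelling as $d_2$ the exponent of $\mathcal{C}'$ that satisfies that condition in case (2)(i), which is consistent with the way (2)(i) is phrased. Once this labelling is fixed, the forward and backward directions fit together cleanly, and the three previously established theorems \ref{thm:exp_computation}, \ref{thm:del_inv}, \ref{thm_add_del_converse_free} together with \cite[Theorem 1.1]{Dimca} complete the equivalence.
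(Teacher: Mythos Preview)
Your proposal is correct and follows essentially the same route as the paper: the forward direction is obtained by combining \cite[Theorem~1.1]{Dimca} with Theorem~\ref{thm:exp_computation}, and the converse is handled by Theorem~\ref{thm_add_del_converse_free} in case~(i) and Theorem~\ref{thm:del_inv} in case~(ii). The bookkeeping you flag (rewriting $\deg(\mathcal{C}')=\deg(\mathcal{C})-1$ and matching the $d_2$ labelling) is exactly the minor alignment the paper leaves implicit.
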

 
 \begin{proof}
 $1.  \Rightarrow 2.$  By \cite[Theorem 1.1]{Dimca} and Theorem \ref{thm:exp_computation}.\\
 $2.  \Rightarrow 1.$ If (i) holds, then the claim follows from Theorem \ref{thm_add_del_converse_free}.   If (ii) holds, then the claim follows from Theorem \ref{thm:del_inv}.
 \end{proof}
 
  \begin{theorem}
 \label{thm:char_free_addition}
  Let $\mathcal{C}$ be a reduced curve. The following are equivalent:
  \begin{enumerate}
  \item[1.] $\mathcal{C}$ is free with exponents $(d_2, d_3)$.
   \item[2.]  For some line $L \subset \mathbb{P}^2$ that is not an irreducible component of $\mathcal{C}$, $\mathcal{C} \cup L$ is either
   \begin{enumerate}
 	  \item[(i)]  free with exponents  $(d_2, d_3+1)$ and  $| \mathcal{C} \cap L|+   \epsilon(\mathcal{C}, L) = d_2+1$  or
  	  \item[(ii)]   plus-one generated with exponents $(d_2+1, d_3+1)$ and level $d =   \epsilon(\mathcal{C}, L) + | \mathcal{C} \cap L|-1$.
   \end{enumerate}
    \item[3.]  For any line $L \subset \mathbb{P}^2$ that is not an irreducible component of $\mathcal{C}$, $\mathcal{C} \cup L$ is either
   \begin{enumerate}
 	  \item[(i)]  free with exponents  free with exponents  $(d_2, d_3+1)$ and  $| \mathcal{C} \cap L|+   \epsilon(\mathcal{C}, L) = d_2+1$  or
  	  \item[(ii)]   plus-one generated with exponents $(d_2+1, d_3+1)$ and level $d = \epsilon(\mathcal{C}, L) + | \mathcal{C} \cap L|-1$.
   \end{enumerate}
  \end{enumerate}
 \end{theorem}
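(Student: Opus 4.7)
The plan is to prove the cycle $1 \Rightarrow 3 \Rightarrow 2 \Rightarrow 1$. The implication $3 \Rightarrow 2$ is immediate, since there is always a line in $\mathbb{P}^{2}$ that is not an irreducible component of $\mathcal{C}$. The implication $1 \Rightarrow 3$ is exactly Dimca's addition result, Theorem \ref{thm:add}, applied to an arbitrary line $L \not\subset \mathcal{C}$; the dichotomy (i)/(ii) in that theorem matches verbatim the dichotomy in statement 3.

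The substantive content is the implication $2 \Rightarrow 1$. Given a line $L$ with the properties described in statement 2, I would consider the triple $(\mathcal{C} \cup L, \mathcal{C}, \mathcal{C} \cap L)$ in the sense of Section \ref{sec:prelim}. In this triple, $\mathcal{C}$ plays the role of the ``smaller'' curve (the one denoted $\mathcal{C}'$ in Theorems \ref{thm:add_converse} and \ref{thm_add_del_converse_free}), and the quantity $\epsilon(\mathcal{C}, L)$ corresponds to $\epsilon(\mathcal{C}', L)$ in those statements. The proof then splits according to cases (i) and (ii).

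In case (i), $\mathcal{C} \cup L$ is free with exponents $(d_2, d_3+1)$ and $|\mathcal{C} \cap L| + \epsilon(\mathcal{C}, L) = d_2+1$, so the implication $(2) \Rightarrow (1)$ of Theorem \ref{thm_add_del_converse_free} directly yields that $\mathcal{C}$ is free with exponents $(d_2, d_3)$. In case (ii), $\mathcal{C} \cup L$ is plus-one generated with exponents $(d_2+1, d_3+1)$ and level $d$ satisfying $|\mathcal{C} \cap L| + \epsilon(\mathcal{C}, L) = d+1$, which is precisely the hypothesis of Theorem \ref{thm:add_converse}; applying that theorem to $\mathcal{C} \cup L$ yields that $\mathcal{C}$ is free with exponents $(d_2+1-1, d_3+1-1) = (d_2, d_3)$.

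No serious obstacle is anticipated: the theorem is essentially a repackaging of Dimca's addition theorem together with its two converses, Theorems \ref{thm:add_converse} and \ref{thm_add_del_converse_free}, all of which are established earlier in the section. The only care required is the bookkeeping of the invariants (exponents, level, and $|\mathcal{C}\cap L| + \epsilon(\mathcal{C},L)$), so that the numerical conditions in cases (i) and (ii) precisely match the hypotheses of the relevant converse results when the roles of ``large'' and ``small'' curve are interchanged.
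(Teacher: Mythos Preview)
Your proposal is correct and follows essentially the same route as the paper: both argue the cycle $1\Rightarrow 3\Rightarrow 2\Rightarrow 1$, invoking Theorem~\ref{thm:add} for $1\Rightarrow 3$, noting $3\Rightarrow 2$ is trivial, and handling $2\Rightarrow 1$ via Theorem~\ref{thm_add_del_converse_free} in case~(i) and Theorem~\ref{thm:add_converse} in case~(ii). Your additional bookkeeping remarks on matching the roles of the large/small curve and the numerical invariants are accurate and simply make explicit what the paper leaves implicit.
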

 
 \begin{proof}
  $1.  \Rightarrow 3.$  follows from Theorem \ref{thm:add}.  $3.  \Rightarrow 2.$ is obvious. Let us prove  $2.  \Rightarrow 1.$ If (i) holds, then the claim follows from Theorem  \ref{thm_add_del_converse_free}.   If (ii) holds, then the claim follows from Theorem \ref{thm:add_converse}.
 \end{proof}
 
\begin{remark}
\label{rem:thm1.11Abe} The above Theorems  \ref{thm:char_free_deletion} and \ref{thm:char_free_addition} generalize \cite[Theorem 1.11]{A0}, where the case of projective line arrangements is treated.
\end{remark}

\bigskip

Anca~M\u acinic,
Simion Stoilow Institute of Mathematics of the Romanian Academy, 
P.O. Box 1-764, RO-014700 Bucharest, Romania. \\
\nopagebreak
\textit{E-mail address:} \texttt{anca.macinic@imar.ro}

\bigskip
Piotr Pokora,
Department of Mathematics,
University of the National Education Commission Krakow,
Podchor\c a\.zych 2,
PL-30-084 Krak\'ow, Poland. \\
\nopagebreak
\textit{E-mail address:} \texttt{piotr.pokora@up.krakow.pl}
\end{document}